\documentclass[11pt]{amsart}
\usepackage{graphicx,amsfonts,amssymb,amsmath,amsthm,url,
  verbatim,amscd, fullpage}
  \usepackage{color}
\usepackage[usenames,dvipsnames]{xcolor}
\usepackage[normalem]{ulem}
\usepackage{pdfsync}
\usepackage{booktabs}
\usepackage[hyperfootnotes=false, colorlinks, citecolor=RoyalBlue,
urlcolor=blue, linkcolor=blue ]{hyperref}

\newtheorem{lemma}{Lemma}[section]
\newtheorem{theorem}[lemma]{Theorem}

\newtheorem{corollary}[lemma]{Corollary}
\newtheorem{proposition}[lemma]{Proposition}

\theoremstyle{remark}
\newtheorem{definition}[lemma]{Definition}
\newtheorem{remark}[lemma]{Remark}

\newtheorem{example}[lemma]{Example}

\allowdisplaybreaks

\renewcommand{\H}{\mathbb H}
\newcommand{\A}{{\mathbb A}}
\newcommand{\Aa}{{\mathcal A}}
\newcommand{\m}{{\rm max}}
\newcommand{\cond}{{\rm cond}}
\renewcommand{\a}{{\mathbf a}}
\newcommand{\f}{{\mathbf f}}
\renewcommand{\c}{{\mathbf c}}
\newcommand{\q}{{\mathfrak q}}
\newcommand{\F}{{\mathcal F}}
\newcommand{\J}{{\mathcal J}}
\newcommand{\Q}{{\mathbb Q}}
\newcommand{\Z}{{\mathbb Z}}

\renewcommand{\O}{{\mathcal O}}
\newcommand{\G}{{\mathcal G}}
\renewcommand{\L}{{\mathcal L}}

\newcommand{\R}{{\mathbb R}}
\newcommand{\C}{{\mathbb C}}
\newcommand{\bs}{\backslash}

\newcommand{\p}{\mathfrak p}

\renewcommand{\S}{\mathfrak S}
\renewcommand{\P}{\mathcal P}
\newcommand{\OF}{{\mathfrak o}}
\newcommand{\GL}{{\rm GL}}

\newcommand{\SL}{{\rm SL}}
\newcommand{\SO}{{\rm SO}}

\newcommand{\ram}{{\bf S}}
\newcommand{\ur}{{\bf ur}}

\newcommand{\Tr}{{\rm tr}}
\newcommand{\tr}{{\rm Tr}}
\newcommand{\nr}{{\rm nr}}
\newcommand{\Ad}{{\rm Ad}}

\newcommand{\disc}{{\rm disc}}

\newcommand{\mat}[4]{{\setlength{\arraycolsep}{0.5mm}\left(
      \begin{array}{cc}#1&#2\\#3&#4\end{array}\right)}}

\newcommand{\AI}{{\mathcal{AI}}}


\def\SL{\operatorname{SL}}
\def\vol{\operatorname{vol}}
\def\GL{\operatorname{GL}}

\renewcommand{\Im}{\mathrm{Im}}
\def\eps{\epsilon}

\newcommand{\Phii}{\Phi_\pi^{(i)}(a,m)}
\newcommand{\zxz}[4]{\begin{pmatrix} #1 & #2 \\ #3 & #4 \end{pmatrix}}

\newcommand{\E}{\ensuremath{E}}
\newcommand{\D}{\ensuremath{{\Delta}}}
\newcommand{\Vol}{\text{Vol}}
\newcommand{\AL}{\ensuremath{\alpha}}

\begin{document}

\bibliographystyle{plain}

\title[{Sup-norm in the level aspect for compact arithmetic surfaces, II}]{Sup-norms of eigenfunctions in the level aspect for compact arithmetic surfaces, II: newforms and subconvexity}

\author{Yueke Hu}
\address{Yau Mathematical Sciences Center\\
  Tsinghua University\\
  Beijing 100084\\
  China}
\email{yhumath@mail.tsinghua.edu.cn}

\author{Abhishek Saha}
\address{School of Mathematical Sciences\\
  Queen Mary University of London\\
  London E1 4NS\\
  UK}
  \email{abhishek.saha@qmul.ac.uk}

\begin{abstract}We improve upon the local bound in the depth aspect for sup-norms of newforms on $D^\times$ where $D$ is an indefinite quaternion division algebra over $\Q$. Our sup-norm bound implies a depth-aspect subconvexity bound for $L(1/2, f \times \theta_\chi)$, where $f$ is a (varying) newform on $D^\times$ of level $p^n$, and $\theta_\chi$ is an (essentially fixed) automorphic form on $\GL_2$ obtained as the theta lift of a Hecke character $\chi$ on a quadratic field.

  For the proof, we augment the amplification method with  a novel filtration argument and a recent counting result proved by the second-named author to reduce to showing strong quantitative decay of matrix coefficients of local newvectors along compact subsets, which we establish via $p$-adic stationary
phase analysis. Furthermore, we prove a general upper bound in the level aspect for sup-norms of automorphic forms belonging to \emph{any} family whose associated matrix coefficients
have such a decay property.




\end{abstract}

\subjclass[2010]{Primary 11F70; Secondary  11F12, 11F66, 11F67, 11F72, 11F85, 22E50.}
\keywords{automorphic form, newform, Maass form, subconvexity,  sup-norm, amplification, L-function,  level aspect, depth aspect, matrix coefficient, quaternion algebra}

\maketitle

\section{Introduction}\label{s:introduction}
Let $D$ be an indefinite quaternion  algebra over $\Q$. For any integer $N$ coprime to the discriminant of $D$, let $\Gamma^D_0(N)\subset \SL_2(\R)$ denote the congruence subgroup\footnote{This subgroup is well-defined up to conjugation in $\SL_2(\R)$ and we fix a choice once and for all.} corresponding to the norm 1 units of an Eichler order of level $N$ inside $D$. There has been a lot of work on bounding the sup norm $\|f \|_\infty$ of a Hecke-Maass \emph{newform} of weight 0 and Laplace eigenvalue $\lambda$ on $\Gamma^D_0(N) \bs \H$, where $f$ is $L^2$-normalized with respect to the measure that gives volume 1 to $\Gamma^D_0(N) \bs \H$. (For simplicity, we only discuss the case of newforms with trivial character in the introduction.)

The pioneering work here  is due to Iwaniec and Sarnak \cite{iwan-sar-85}, who proved the eigenvalue aspect bound\footnote{All implied constants in this paper will depend on $D$ without explicit mention.} $\|f\|_\infty \ll_\eps \lambda^{5/24 + \eps}$ in the case $N=1$. For the \emph{level} aspect analogue of this problem, the goal is to bound $\|f\|_\infty$ in terms of  $N$, with the dependance on $\lambda$ suppressed. It will be convenient to use the notation  $N_1$ to denote the smallest integer such that $N|N_1^2$. Clearly $\sqrt{N} \le N_1 \le N$. Note that $N_1$ equals $N$ if $N$ is squarefree while $N_1$ is around $\sqrt{N}$ when all the prime factors of $N$ divide it to a high power. To show the rapid progress in the level aspect version of the sup-norm problem for newforms on $D$, we quote the results proved so far in this direction\footnote{We do not attempt to survey other sup-norm results, such as the various recent works  concerning lower bounds,  hybrid bounds, holomorphic forms, general multiplier systems, general number fields, higher rank groups, exotic vectors at the ramified places, function field analogues, and so on and so forth. We refer the reader to the introductions of \cite{blomer-harcos-milicevic-maga, saha-sup-minimal} for brief discussions of some of these related results.}.

\subsubsection*{The case $D = M_2(\Q)$} The ``trivial bound" (which is not completely trivial, since one has to be careful about behaviour near cusps) is $\|f\|_\infty \ll_{\lambda, \epsilon} N^{\frac12+ \epsilon}$.  The following bounds were proved in rapid succession: \begin{itemize}
\item $\|f\|_\infty \ll_{\lambda, \epsilon} N^{\frac12 -\frac{25}{914} + \epsilon}$ for squarefree $N$ (Blomer and Holowinsky~\cite{blomer-holowinsky}, 2010);

\item  $\|f\|_\infty \ll_{\lambda, \epsilon} N^{\frac12-\frac{1}{22} + \epsilon}$ for squarefree $N$ (Templier~\cite{templier-sup}, 2010);

  \item  $\|f\|_\infty \ll_{\lambda, \epsilon} N^{\frac12-\frac{1}{20} + \epsilon}$ for squarefree $N$ (Helfgott--Ricotta, unpublished);

 \item $\|f\|_\infty \ll_{\lambda, \epsilon} N^{\frac12-\frac{1}{12} + \epsilon}$ for squarefree $N$ (Harcos and Templier~\cite{harcos-templier-1}, 2012);

  \item $\|f\|_\infty \ll_{\lambda, \epsilon} N^{\frac{1}{3} + \epsilon}$ for squarefree $N$ (Harcos and Templier~\cite{harcos-templier-2}, 2013);

          \item $\|f\|_\infty \ll_{\lambda, \epsilon} N^{\frac16 + \eps}
  N_1^{\frac16}$ for general $N$ (Saha~\cite{saha-sup-level-hybrid}, 2017).
      \end{itemize}

\subsubsection*{The case $D$ a division algebra} The ``trivial bound" is again $\|f\|_\infty \ll_{\lambda, \epsilon} N^{\frac12+ \epsilon}$.  The following improved bounds have been proved so far:

\begin{itemize}

\item  $\|f\|_\infty \ll_{\lambda, \epsilon} N^{\frac12-\frac{1}{24} + \epsilon}$ for general $N$ (Templier~\cite{templier-sup}, 2010);

          \item $\|f\|_\infty \ll_{\lambda, \epsilon}
  N_1^{\frac12+\eps}$ for general $N$ (Marshall~\cite{marsh15}, 2016);

      \item $\|f\|_\infty \ll_{\lambda, \epsilon}
  N^{\frac{1}{24}+\eps}N_1^{\frac12 - \frac{1}{12}}$ for general $N$ (Saha~\cite{saha-sup-minimal}, 2020);

      \end{itemize}

\medskip

Our main focus  in this paper is on a natural subcase of the level aspect, known as the \emph{depth} aspect, where one takes $N=p^n$ with $p$ a\emph{ fixed} prime and  $n$ varying. In this aspect, the best currently known bound for the sup-norm  is \begin{equation}\label{e:depthlocal}\|f\|_\infty \ll_{\lambda, p, \eps} p^{(n/4)(1+\eps)},\end{equation}as is clear from the list of previous results above; indeed, the bound \eqref{e:depthlocal} in the case $D=M_2(\Q)$ follows from work of the second-named author \cite{saha-sup-level-hybrid}  and  in the case when $D$ is a division algebra follows from work of Marshall \cite{marsh15}. More pertinently, the bound \eqref{e:depthlocal} coincides with the level aspect \emph{local} bound (which is stronger than the trivial bound\footnote{The level aspect local bound  is the immediate bound emerging from the adelic pre-trace formula where the local test function at each ramified prime is chosen to be essentially best possible. For a detailed discussion about  local bounds in a more general context, and its relationship with the trivial bound, see Section 1.4 of \cite{saha-sup-minimal}.}) which states in general that \begin{equation}\label{e:local}\|f\|_\infty \ll_{\lambda, \eps} N_1^{1/2 + \eps}.\end{equation} When we restrict ourselves to the depth aspect, we have $N_1 \asymp \sqrt{N}$ as $N=p^n \rightarrow \infty$ and so \eqref{e:depthlocal} is essentially \emph{equivalent} to \eqref{e:local} in this aspect. In contrast,  for squarefree levels $N$, we have $N_1=N$, and  the best currently known bounds in that case, due to Harcos-Templier \cite{harcos-templier-2} for $D=M_2(\Q)$ and Templier \cite{templier-sup} for $D$ a division algebra, successfully \emph{beat} the local bound by a positive power of $N$ as evidenced from the list of previous results quoted earlier. However, despite considerable recent activity on the sup-norm problem, the local bound in the depth aspect for newforms  has not been improved upon so far. We refer the reader to the end of the introduction of \cite{saha-sup-minimal} for a brief discussion why the usual methods are not sufficient to beat the local bound in this case.

In this paper, we improve upon \eqref{e:depthlocal} for the first time. For this, we introduce a new technique to attack the sup-norm problem which relies on quantifying the \emph{decay of local matrix coefficients} at the ramified primes along a filtration of compact subsets. To avoid dealing with behaviour at the cusps and Whittaker expansions, we restrict ourselves here to the case of $D$ a division algebra (though we have no doubt that our results can be extended to the case of $\GL_2$ with some additional technical work). We prove the following result.

\medskip

\textbf{Theorem A.} (see Corollary \ref{c:mainnewforms}) \emph{Let $D$ be a fixed indefinite quaternion division algebra over $\Q$ and $p$ be an odd prime coprime to the discriminant of $D$.  Then, for any $L^2$-normalized Maass newform $f$ of Laplace eigenvalue $\lambda$ on $\Gamma^D_0(p^n) \bs \H$, we have $$\|f\|_\infty \ll_{\lambda, p, \eps} p^{n\left(\frac{5}{24}+\eps\right)}.$$}

\begin{remark}Corollary \ref{c:mainnewforms} of this paper is more general than Theorem A in that it allows for general \emph{composite} levels (and the implied constant is polynomial in the product of primes dividing the level). Corollary \ref{c:mainnewforms} also includes the case of holomorphic forms $f$. Corollary \ref{c:mainnewforms}  is itself a very special case of the master theorem of this paper, Theorem \ref{t:maingen}, which applies to any family of automorphic forms on $D^\times(\A)$ satisfying certain hypotheses on decay of matrix coefficients.\end{remark}

We will explain the main ideas behind the proof of Theorem A later in this introduction, but first, let us describe an interesting application of this theorem to the \emph{subconvexity problem} for central $L$-values. The key idea, going back to Sarnak (see the nice exposition in Section 4 of \cite{sarnak93}), is that the conjectured strongest bounds for the sup-norms of automorphic forms often imply the Lindel\"of hypothesis in certain aspects for their associated $L$-functions. This leads to the question of whether one can use non-trivial sup-norm bounds to deduce subconvex bounds for $L$-functions. In this context, Iwaniec and Sarnak pointed out (see Remark D of \cite{iwan-sar-85}) that  their sup-norm result leads via Eisenstein series to a $t$-aspect subconvexity result for the Riemann zeta function. In fact, sup-norm bounds for Eisenstein series proved in \cite{young18} and \cite{blomer_2018} directly imply subconvexity bounds in the $t$-aspect for the Dedekind $L$-functions of imaginary quadratic fields (this follows by considering the values taken by the Eisenstein series at CM points). Very recently, uniform sup-norm bounds (with a dependence on the point of evaluation)  have been used in \cite{asbjorn} to prove hybrid subconvex bounds in the $t$ and $m$ aspects for  $L$-functions of ideal class characters of quadratic fields of discriminant $m$.

However, the above mentioned subconvexity results are only for $\GL_1$ $L$-functions and use  sup-norm bounds in the eigenvalue-aspect. Regarding the level aspect sup-norm problem for cusp forms on the upper-half plane and its connection to the subconvexity problem, see the discussion on page 647 of \cite{blomer-holowinsky}, which  points out that to prove \emph{any} level-aspect subconvex bound for an associated $L$-function by directly substituting a sup-norm bound into a period formula typically requires very strong\footnote{This is not surprising for at least two reasons: (a) sup-norm bounds hold for the whole space while period formulas only involve the values at a certain set of points or a submanifold, (b) substituting a sup-norm bound onto a period formula cannot detect any additional cancellation in the integral or sum involved in the formula.} sup-norm bounds. \emph{In particular, for level-aspect subconvexity, one needs to beat the exponent $1/4$ for the sup-norm problem}. Theorem A represents the first result that achieves this. Therefore, in this paper, we are finally able to carry out this strategy to deduce a depth-aspect subconvex bound from Theorem A. We give below a special case of the result we are able to obtain.

\medskip

\textbf{Theorem B.} (see Theorem \ref{t:sub}, Corollary \ref{c:sub} and Corollary \ref{c:gl2sub})  \emph{Let $D$ be a fixed indefinite quaternion division algebra over $\Q$ and let $p$ be an odd prime coprime to the discriminant of $D$. Let $K$ be a quadratic number field such that $p$ splits in $K$ and all primes dividing the discriminant of $D$ are inert in $K$. Let $\chi$ be a Hecke character of $K$ such that $\chi |_{\A^\times} =  1$ and such that the ramification set of $\chi$ does not intersect the places above $\disc(D)$. Let $\theta_\chi$ be the automorphic form\footnote{The condition $\chi |_{\A^\times} =  1$ implies that $\theta_\chi$ corresponds to a Maass form of weight 0 and Laplace eigenvalue $\ge 1/4$ if $K$ is real, and a holomorphic modular form of weight $\ge 1$ if $K$ is imaginary; moreover $\theta_\chi$ is a cusp form if and only if $\chi^2 \neq 1$.} on $\GL_2$ obtained as the theta lift of $\chi$. Let $f$ be a Maass newform of Laplace eigenvalue $\lambda$ on $\Gamma^D_0(p^n) \bs \H$. Then $$L(1/2, f \times \theta_\chi) \ll_{p,K, \chi, \lambda, \eps} (C(f \times \theta_\chi))^{5/24 +\eps}$$ where $L(s, f \times \theta_\chi)$ denotes the Rankin-Selberg $L$-function normalized to have functional equation $s \mapsto 1-s$, and $C(f \times \theta_\chi) \asymp_\chi p^{2n}$ denotes the (finite part of the) conductor of  $L(s, f \times \theta_\chi)$.}

\medskip

\begin{remark} The classical construction of the theta lift $\theta_\chi$ goes back to Hecke and Maass. This was generalized  in the representation-theoretic language by Shalika and Tanaka \cite{shalika-tanaka}; see also \cite[Sec. 13]{harris-kudla-1991} for a more modern treatment. For an explicit formula for $\theta_\chi$ under certain assumptions, see also page 61 of \cite{MR2061214} for the holomorphic case, and Appendix A.1 of \cite{humphries-khan} for the Maass case. The automorphic representation corresponding to $\theta_\chi$ is precisely the global \emph{automorphic induction} $\AI(\chi)$ of $\chi$ from $\A_K^\times$ to $\GL_2(\A)$. This is a special instance of the Langlands correspondence, as explained nicely in Gelbart's book \cite[7.B]{Gelbart1975}.
\end{remark}

\begin{remark} Thanks to the Jacquet-Langlands correspondence, we may equivalently take $f$ in Theorem B to be a newform on $\GL_2$ (of level equal to $\disc(D)p^n$). Theorem B may be viewed as a (depth-aspect) subconvexity result for $L(1/2, f \times g)$ where $g= \theta_\chi$ is fixed and $f$ varies. Subconvexity  for the Rankin-Selberg $L$-function on $\GL_2 \times \GL_2$ (with one of the $\GL_2$ forms fixed)  in the level-aspect was first approached by Kowalski-Michel-Vanderkam \cite{KMV02} and a complete solution was obtained by Harcos--Michel \cite{harcos-michel}. Uniform subconvexity in all aspects  was subsequently  proved in ground-breaking work of Michel-Venkatesh \cite{michel-2009}, who showed that $L(1/2, f \times g) \ll_{g, \eps} C(f\times g)^{1/4 - \delta}$ for general $f$ and $g$ and some $\delta>0$. There have also been recent works, notably by Han Wu, that make the (unspecified) exponent $\delta$ of Michel-Venkatesh explicit in various cases. We also remark that \begin{equation}\label{e:bc}L(1/2, f \times \theta_\chi) = L(1/2, f_K \times \chi)\end{equation} where $f_K$ denotes the \emph{base-change} of $f$ to $K$; so Theorem B may also be viewed as a special instance of subconvexity on $\GL_2(\A_K) \times \GL_1(\A_K)$ with the character on $\GL_1(\A_K)$ fixed. We further note that in the special case that $\chi = \mathbf{1}$ is trivial, the $L$-function factors as $L(1/2, f \times \theta_\mathbf{1})  = L(1/2, f \times \rho_K) L(1/2, f)$ where $\rho_K$ is the quadratic Dirichlet character associated to $K$.

As the above discussion makes clear, subconvexity in the setup of Theorem B is not new. However, the exponent $5/24$ (corresponding to $\delta = 1/24$) we obtain  appears to be the current strongest bound in this particular setting. As a point of comparison, the exponent that can be extracted in our setting from the general bound given in  Corollary 1.6 of \cite{Wu}, followed by an application of \eqref{e:bc}, corresponds to $\delta = \frac{1-2\theta}{32} < \frac{1}{24}$.
\end{remark}

The proof of Theorem B uses an explicit version of Waldspurger's famous formula \cite{Waldspurger1985} relating squares of toric periods and central $L$-values. We emphasize that the proof follows immediately upon substituting the bound from Theorem A into this formula, and does not need any additional ingredients.

\medskip

We now explain the main ideas behind Theorem A, and how they can be put into a general framework. The usual strategy\footnote{A notable exception being a recent preprint of Sawin \cite{sawin19} that treats the\emph{ function field analogue} of the level aspect sup-norm problem using a very different geometric method.} to prove a sup-norm bound in the level aspect is to use the amplification method. This involves choosing  a suitable global test function (a product of local test functions over all places) and then estimating the geometric side of the resulting pre-trace formula by counting the number of lattice points that lie in the support of the test function, as the level varies.  This strategy successfully works to beat the local bound in the squarefree level aspect, where one can choose the local test functions at the ramified primes to be the indicator function  (modulo the centre) of the local Hecke congruence subgroups. This strategy also works very well for families of automorphic forms corresponding to highly \emph{localized} vectors at the ramified places, such as the minimal vectors or the $p$-adic microlocal lifts; the corresponding sup-norm bounds in these cases were proved in \cite{saha-sup-minimal}.

Unfortunately, this strategy  on its own fails to beat the local bound in the depth aspect for newforms. The reason is that local newvectors are not sufficiently localized in the depth aspect, and consequently the support of the ``best" test function modulo the centre, as far as the depth aspect is concerned, is essentially the entire maximal compact subgroup. Therefore the support does not involve many congruence conditions, and congruences are essential
for achieving saving via counting. If we were to reduce the support of our ramified test functions further and thus force new congruences, the resulting saving via counting would be eclipsed by the resulting loss due to the fact that we will be averaging over more cusp forms.

 The key new contribution of this paper is that we focus not merely on the support of the test function, but instead quantify how fast the test function (which is essentially the matrix coefficient of the local newvector) decays within the support.  Roughly speaking, our method  divides up the geometric side of the (amplified) pre-trace formula into multiple pieces, corresponding to a filtration of the support of the local test function.  These pieces are estimated separately to obtain a general theorem that gives a sup-norm bound in the level aspect which is stronger than what can be obtained by existing methods. To illustrate our technique in the setting of Theorem A, for each level $N=p^n$ consider the filtration of compact subgroups $K^*(j) \subset K^*(j-1) \subset \ldots \subset K^*(1)$ of $\GL_2(\Z_p)$, where $j \asymp n/8$ and $K^*(i)$ is equal to the subgroup that looks like $\mat{\ast}{0}{0}{\ast}$ modulo $p^i$. The support of the test function at the prime $p$ is $K^*(1)$. We break up the geometric side of the pre-trace formula into $j$ pieces, with piece $i$ (for $1 \le i \le j$)  corresponding to the matrices whose local component at $p$ lies in $K^*(i)$ but not in $K^*(i+1)$ (where we let $K^*(j+1)$ denote the empty set). Now, we prove that these local matrix coefficients have a proper decay property, due to which the size of the test function at each matrix in piece $i$ is bounded\footnote{In fact, for the purpose of Theorem A, we only need the weaker bound that the size at piece $i$ is bounded by $p^{2i - \frac{n}{4}}$.} by $p^{\frac{i}{2} - \frac{n}{4}}$.  Therefore for  each piece, we get a saving  from two sources: (a) from the size of the test function, (b) from counting lattice points. The saving from source (a) is large when $i$ is small, which is precisely when the saving from source (b) is small. Conversely, when $i$ is large, the saving from source (a) is small and the saving from source (b) is large.  We emphasize that we are still using an amplified pre-trace formula, but with the extra ingredient described above, which leads to the bound in Theorem A.

The reader may have noticed that our exponent $5/24$ in Theorem A coincides with the exponent obtained by Iwaniec--Sarnak in \cite{iwan-sar-85}. In hindsight, our filtration strategy at a place $p$ is analogous to the argument used by Iwaniec and Sarnak in \cite[Lemma 1.1 - 1.3]{iwan-sar-85} for the test function at infinity in their classic work on the eigenvalue aspect of the sup-norm problem. Crucially, our bound  for the size of $p$-adic matrix coefficients, and that of Iwaniec--Sarnak for the archimedean matrix coefficient, both involve showing that
the coefficient decays away from a torus. The relation between the sup-norm problem and subconvexity is also similar. In particular, when $\chi$ is fixed in Theorem B, the  local bound agrees with the convexity bound for the central $L$-value, and the
automorphic period we consider is a sum over a fixed collection of points if $K$ is imaginary (resp., a sum of integrals over closed geodesics if $K$ is real), and subconvexity follows  from any improvement over the local bound with no
cancellation in the sum being required. This is analogous to what happens on applying the Iwaniec--Sarnak bound for Eisenstein series, where one obtains a  $t$-aspect subconvexity result for the Riemann zeta function (see Remark D of \cite{iwan-sar-85}).

On the other hand, the required bounds for the archimedean matrix coefficient used by Iwaniec and Sarnak (see Lemma 1.1 of \cite{iwan-sar-85}) follow in an elementary manner using integration by parts. However, our $p$-adic matrix coefficient is more subtle and so we need quantitative results on the decay  across a sequence of \emph{compact} subsets of matrix coefficients associated to local newvectors. Such results do not appear to be available in the literature; indeed, existing results on decay of matrix coefficients (e.g., see \cite{Oh02}) typically give the decay for the torus-component of the elements (in the sense of the Cartan decomposition) going to infinity, which are completely orthogonal to what we require. In Theorem \ref{p:localnewmatrixmain}(1), we provide a general quantitative statement about the decay of these matrix coefficients of the sort we need, which may be of independent interest. The proof of Theorem \ref{p:localnewmatrixmain} is carried out in Section \ref{s:local} (which can be read independently of the rest of the paper) and uses the stationary phase method in the $p$-adic context. A key role in the proof is played by an  useful formula\footnote{For some history of this type of formula, see Remark 2.20 of \cite{corbett-saha}.} for the Whittaker newvector in terms of a family of ${}_2F_1$
hypergeometric integrals, which allows us to use the $p$-adic stationary phase method.

The idea outlined above can be phrased in a more general context (without any need to restrict ourselves to newforms) to prove an improved sup-norm bound whenever suitable results on decay of local matrix coefficients along a filtration of compact subsets are available. We develop a suitable language for such a result in Sections \ref{s:localfam} and \ref{s:globalfamilies} leading to Theorem \ref{t:maingen}, which may be regarded as the ``master theorem" of this paper.  Theorem \ref{t:maingen} gives a strong sup-norm bound for any family of automorphic forms of powerful levels for which certain local hypotheses are satisfied. Thus it reduces the question of proving these bounds to checking these local hypotheses, and Theorem \ref{p:localnewmatrixmain}, described earlier, is essentially the statement that these local hypotheses are satisfied by the family of local newvectors of odd conductor and trivial central character. The proof of Theorem \ref{t:maingen}  is carried out in Section \ref{s:amplglobal} and uses as a main ingredient a lattice-point counting result proved in \cite{saha-sup-minimal}.

We end this introduction with a few remarks about possible extensions of this work. It should be possible to extend the argument to prove a non-trivial
hybrid bound (simultaneously in the depth and eigenvalue aspects) for the sup-norm, however we do not attempt to do so here. The method of this paper can be combined with  the Fourier/Whittaker expansion at various cusps in the adelic context (the necessary machinery for which is now available thanks to recent work of Assing \cite{assing18} building on earlier work of the second author \cite{sahasupwhittaker, saha-sup-level-hybrid}) to give a depth aspect sub-local bound in the case $D = M_2(\Q)$  (possibly with a different exponent than in Theorem A due to some differences in the counting argument).   Finally, this paper provides a general strategy of how one should go about improving the local bound in the level aspect in cases where the local vectors are not sufficiently localized. Essentially, the message is that one needs to combine a counting argument with a ``decay of matrix coefficients" argument to successfully attack this problem for a wide array of local and global families.

\section{Preliminaries}\label{s:global}
\subsection{Basic notations}\label{s:globalstatement}
The basic notations used in this paper are by and large the same as those in \cite{saha-sup-minimal}, but for convenience we recall them here.

 \subsubsection*{Generalities}Let $\f$ denote the finite places of $\Q$ (which we identify with the set of primes) and $\infty$ the archimedean place.  We let $\A$ denote the ring of adeles over $\Q$, and $\A_\f$ the ring of finite adeles.   Given an algebraic group $H$ defined over $\Q$, a place $v$ of $\Q$, a subset of places $U$ of $\Q$, and a positive integer $M$, we denote $H_v:= H(\Q_v)$, $H_U:=\prod_{v \in U} H_v$, $H_M :=\prod_{p|M} H_p$. Given an element $g$ in $H(\Q)$ (resp., in $H(\A)$), we will use $g_p$ to denote the image of $g$ in $H_p$ (resp., the $p$-component of $g$); more generally for any set of places $U$, we let $g_U$ denote the image of $g$ in $H_U$.

Given two integers $a$ and $b$, we use $a|b$ to denote that $a$ divides $b$, and we use $a|b^\infty$ to denote that $a|b^n$ for some positive integer $n$. For any real number $\alpha$, we let $\lfloor \alpha \rfloor$ denote the greatest integer less than or equal to $\alpha$ and we let $\lceil \alpha \rceil$ denote the smallest integer greater than or equal to $\alpha$. For any integer $A=\prod_{p\in \f} p^{a_p}$, we write \begin{equation}\label{A1} A_1 = \prod_{p\in \f}  p^{\lceil \frac{a_p}2 \rceil} \end{equation} In other words, $A_1$ is the smallest integer such that $A$ divides $A_1^2$.

All representations of (topological) groups are assumed to be continuous and over the field of complex numbers.

\subsubsection*{Quaternions, orders, and groups} Throughout this paper, we fix an indefinite quaternion division algebra $D$ over $\Q$. We fix once and for all a maximal order $\O^\m$ of $D$. All constants in the bounds in this paper will be allowed to depend on $D$ without explicit mention. We let $d$ denote the reduced discriminant of $D$, i.e., the product of all primes such that $D_p$ is a division algebra. We let $\nr$ be the reduced norm on $D^\times$.

We denote $G=D^\times$ and $G' =PD^\times = D^\times/Z$ where $Z$ denotes the center of $D^\times$.
   For each prime $p$, let $K_p=(\O^\m_p)^\times$ and let $K_p'$ denote the image of $K_p$ in $G'_p$. Given an order $\O$ of $D$, we define a compact open subgroup of $G(\A_\f)$ by $$K_\O =\prod_{p \in \f} \O_p^\times.$$

 For each place $v$ that is not among the primes dividing $d$, fix once and for all an isomorphism $\iota_v: D_v  \xrightarrow {\cong} M(2,\Q_v)$. We assume that these isomorphisms are chosen such that for each finite prime $p\nmid d$, we have $\iota_p(\O_p) = M(2,\Z_p)$.  By abuse of notation, we also use $\iota_v$ to denote the composition map $D(\Q) \rightarrow D_v \rightarrow M(2,\Q_v)$.

For any lattice $\L\subseteq \O^\m$ of $D$, we get a local lattice $\L_p$ of $D_p$ by localizing at each prime $p$. These collection of lattices satisfy \begin{equation}\label{e:localization} \L = \{g \in D: g_p \in \L_p \text{ for all primes }p\}.\end{equation}  Conversely, if we are given a collection of local lattices $\{\L_p\}_{p \in \f}$, such that $\L_p \subseteq \O^\m_p$ for all $p$ and $\L_p = \O^\m_p$ for all but finitely many $p$, then there exists a unique lattice $\L\subseteq \O^\m$ of $D$ defined via \eqref{e:localization} and whose localizations at primes $p$ are precisely the $\L_p$.  We will refer to $\L$ as the global lattice corresponding to the collection of local lattices $\{\L_p\}_{p \in \f}$. More generally, given a finite subset $S \subseteq \f$, and  a collection of local lattices $\{\L_p\}_{p \in S}$, we can construct the (unique) lattice whose localization at a prime $p$ equals $\L_p$ if $p\in S$ and equals $\O_p^\m$ if $p \notin S$; we will refer to this lattice as the global lattice corresponding to $\{\L_p\}_{p \in S}$.

Let $\L$ be a lattice in $D$ such that $\L\subseteq \O^\m$. We say that $\L$ is \emph{tidy} in $\O^\m$ if $\L$ contains 1, and $M_3^2$ divides $N=[\O^\m: \L]$ where $(M_1, M_2, M_3)$ are the unique triple of positive integers such that $M_1|M_2|M_3$ and $ \O^\m /\L \simeq (\Z / M_1\Z) \times (\Z / M_2\Z) \times (\Z / M_3\Z).$ Note that since $N=M_1M_2M_3$, $M_3^2$ divides $N$ if and only if $N$ divides $(M_1M_2)^2$ if and only if $M_3$ divides $M_1M_2$. Let $\L_p$ be a lattice of $D_p$ such that $\L_p\subseteq \O_p^\m$. We say that $\L_p$ is tidy in $\O_p^\m$ if $1 \in \L_p$ and $m_3 \le m_1+m_2$, where $(m_1, m_2, m_3)$ are the unique triple of non-negative integers such that $m_1\le m_2 \le m_3$ and $\O^\m_p/\L_p \simeq (\Z_p / p^{m_1}\Z_p) \times (\Z_p / p^{m_2}\Z_p)  \times (\Z_p / p^{m_3}\Z_p).$ It is clear that a global lattice $\L$ is tidy in $\O^\m$ if and only if all the corresponding local lattices $\L_p$ are tidy in $\O^\m_p$.

For each $g \in G(\A_\f)$, and a lattice $\L$ of $D$, we let ${}^g\L$ denote the lattice whose localization at each prime $p$ equals $g_p \L_p g_p^{-1}$. Note that if $g \in K_{\O^\m}$, and $\L$ is tidy in $\O^\m$, then  ${}^g\L$ is also tidy in $\O^\m$.

\subsubsection*{Haar measures}We fix the Haar measure on each group $G_p$ such that $\vol(K_p)=1$. We fix a Haar measure on $\Q_p^\times$ such that $\vol(\Z_p^\times)=1$. This gives us resulting Haar measures on each group $G'_p$ such that $\vol(K'_p)=1$.  Fix any Haar measure on $G_\infty$, and take the Haar measure on $\R^\times$ to be equal to $\frac{dx}{|x|}$ where $dx$ is the Lebesgue measure. This gives us a Haar measure on $G'_\infty$. Take the measures on $G(\A)$ and $G'(\A)$ to be given by the product measure.  

 For each continuous function  $\phi$ on the space $G(\A)$, we let $R(g)$ denote the right-regular action, given by $(R(g)\phi)(h) = \phi(hg)$. If a  continuous function  $\phi$ on $G(\A)$ satisfies that $|\phi|$ is left $Z(\A)G(\Q)$ invariant, define
\begin{equation}\label{e:l2norm}\|\phi\|_2 = \left(\int_{G'(\Q)\bs G'(\A)} |\phi(g)|^2 dg\right)^{1/2}.\end{equation}
Note above that $G'(\Q)\bs G'(\A)$ is compact, so convergence of the integral is not an issue.

 \subsubsection*{Asymptotic notation}
 We use the notation
$A \ll_{x,..,y} B$
to signify that there exists
a quantity $C$ depending only on $x,..,y$ (and possibly on any objects fixed throughout the paper) so that
$|A| \leq C |B|$. We use $A \asymp_{x,..y} B$ to mean that $A \ll_{x,..y} B$ and $B \ll_{x,..y} A$.
 The symbol $\epsilon$ will denote a small positive quantity whose value may change from line to line; a statement such as $A \ll_{\eps, x,..} B$ should be read as ``For all small $\eps>0$, there is a quantity $C$ that depends only on $\eps, x,..,$ and on any objects fixed throughout the paper,  such that $|A| \leq C |B|$." An assertion such as $A \ll_{x,..y} D^{O(1)} B$ means that there is a constant $C$ such that $|A| \ll_{x,..y} |D|^{C} |B|$.  
\subsection{A counting result}Let $u(z_1, z_2)= \frac{|z_1 - z_2|^2 }{4 \Im(z_1)\Im(z_2)}$, which is a function of the usual hyperbolic distance on $\H$. For the convenience of the reader, we recall a counting result from \cite{saha-sup-minimal} that will be used later.

\begin{proposition}\label{t:counting} For a compact subset $\J$  of $\H$ and a tidy lattice  $\L \subseteq \O^\m$ with $[\O^\m: \L]=N$, the following bounds hold for all $z \in \J$,

\begin{equation}\label{e:bd1}\sum_{1\le m \le L } |\{\alpha \in \L: \nr(\alpha)=m, u(z, \iota_\infty(\alpha) z) \le \delta\}| \ll_{\epsilon, \delta, \J} L^\epsilon N^{\epsilon} \left(  L +\frac{L^2}{N} \right).\end{equation}
 \begin{equation}\label{e:bd2}\sum_{1\le m \le L } |\{\alpha \in \L: \nr(\alpha)=m^2, u(z, \iota_\infty(\alpha) z) \le \delta\}|  \ll_{\epsilon, \delta,\J} L^\epsilon  N^{\epsilon} \left(L  +\frac{L^3}{N}   \right).\end{equation}

\end{proposition}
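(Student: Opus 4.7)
The plan is to reformulate each count as a lattice-point problem in $D_\R \simeq \R^4$, handling the bulk term ($L^2/N$ or $L^3/N$) by a volume-versus-covolume estimate and the leading $L$ term by a separate analysis of clusters of lattice points on rational quadratic subalgebras.

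For the bulk, I would first observe that the condition $u(z, \iota_\infty(\alpha) z) \le \delta$ depends only on the projective class of $\iota_\infty(\alpha)$ in $PGL_2(\R)$, so it confines $[\alpha]$ to a fixed compact neighborhood $U_\delta$ of the stabilizer torus $T_z/Z$. Using the polar factorization $\alpha = c\, g$ with $|\nr(g)|=1$ and $[g] \in U_\delta$, a Jacobian computation shows that the four-dimensional Lebesgue volume of the region $\{\alpha \in D_\R : |\nr(\alpha)| \le X,\ u(z, \iota_\infty(\alpha) z) \le \delta\}$ is $\ll_{\delta, \J} X^2$. Tidiness of $\L \subset \O^\m$ with $[\O^\m : \L]=N$ forces the covolume of $\L$ in $D_\R$ to be $\asymp N$, and a standard Minkowski count then gives $O(1 + X^2/N)$ lattice points in the region. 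Taking $X=L$ yields the bulk term $L^2/N$ for \eqref{e:bd1}. For \eqref{e:bd2}, taking $X = L^2$ naively gives $L^4/N$; the improvement to $L^3/N$ is obtained by a dyadic/elementary-divisor argument that exploits the perfect-square constraint. Roughly, one writes $\alpha = m_0 \beta$ with $\beta$ primitive in $\L$: the divisibility forces $\beta$ into a sublattice of effective covolume $\gg m_0^2 N$, so that summing over $m_0$ replaces one factor of $L$ by $1$.

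The leading $L$ term in both bounds captures lattice points clustering on rational quadratic subalgebras $E \hookrightarrow D$, which are invisible to the volume estimate when $L^2/N$ (respectively $L^3/N$) is small. For $z \in \J$, the only relevant $E$ is an imaginary quadratic field for which $z$ is a CM point (or nearly so); for $\alpha \in E$, the action $\iota_\infty(\alpha) z = z$ is essentially automatic, so the archimedean constraint imposes nothing and the count of $\alpha \in \L \cap E$ with $\nr(\alpha) \le L$ is $\ll L^{1/2+\epsilon}$ by classical estimates on ideals in imaginary quadratic orders. Summing over the $\ll L^{1/2+\epsilon}$ relevant embeddings yields the $L^{1+\epsilon}$ contribution. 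The analogous treatment for \eqref{e:bd2}, where $\nr(\alpha)$ is restricted to perfect squares, uses the divisor bound on representations $\nr(\alpha) = m^2$ within an imaginary quadratic order.

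The main obstacle will be to verify that this cluster estimate remains uniform in $N$. Specifically, one must show that, for each CM field $E \hookrightarrow D$, the intersection $\L \cap E$ is a suborder of $\O^\m \cap E$ of index dividing $N$, so that the classical count of integral elements applies with a loss of at most $N^\epsilon$. This is precisely where the tidiness hypothesis is essential: without the condition $M_3 \le M_1 M_2$ on the elementary divisors of $\O^\m/\L$, the trace-zero restriction of $\L$ could be skewed in such a way as to inflate the cluster count by additional powers of $N$. The full argument, including all uniformity in both $L$ and $N$, is carried out in \cite{saha-sup-minimal}.
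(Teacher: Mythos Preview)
The paper's own proof is simply a citation: the proposition is quoted as an immediate corollary of \cite[Proposition~2.8 and Remark~2.11]{saha-sup-minimal}, with the sole observation that one replaces the assumption $L \le N^{O(1)}$ there by allowing an extra $L^\epsilon$ on the right-hand side. Since you also ultimately point to \cite{saha-sup-minimal}, your proof and the paper's agree on the substance.

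That said, the heuristic sketch you place before the citation does not match the actual argument and contains real gaps. First, the sentence ``a standard Minkowski count then gives $O(1+X^2/N)$ lattice points'' is not justified: covolume $\asymp N$ alone does not control the number of lattice points in a bounded (and here non-convex) region; one needs information on the successive minima of $\L$, which is precisely where tidiness enters, not only in the cluster analysis as you suggest. Second, your treatment of the leading $L$ term via ``clusters on imaginary quadratic CM subfields'' is not the mechanism used in \cite{saha-sup-minimal}: the bound must hold for \emph{every} $z\in\J$, most of which are not (even nearly) CM, and the claim of $\ll L^{1/2+\epsilon}$ relevant embeddings is unsubstantiated. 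The actual proof in \cite{saha-sup-minimal} proceeds by writing elements in explicit coordinates and reducing to two-dimensional lattice-point counts in discs (in the spirit of Harcos--Templier), with the parabolic/generic dichotomy replaced by suitable case analysis adapted to the order; tidiness is used to bound the shape of the relevant 2D sublattices. Third, your ``$\alpha=m_0\beta$ with $\beta$ primitive'' argument for the square-norm case does not force $\beta$ into any proper sublattice of $\L$; the improvement from $L^4/N$ to $L^3/N$ instead comes from a divisor-type argument on the equation $(a+d)^2-4l^2=(a-d)^2+4bc$, as in the classical setting.

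In short: your citation is correct and suffices, but the preparatory sketch should be dropped or replaced by an accurate summary of the method in \cite{saha-sup-minimal}.
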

\begin{proof}This is an immediate corollary of \cite[Proposition 2.8 and Remark 2.11]{saha-sup-minimal}. Note that Proposition 2.8 of \cite{saha-sup-minimal} had the additional assumption  $1 \le L \le N^{O(1)}$. However, as is clear from the proof of that Proposition, the assumption was used there for the sole purpose of replacing any $L^\epsilon$ factors by $N^\epsilon$. Here, we have removed that assumption and instead included additional  factors of $L^\epsilon$ on the right sides of each of \eqref{e:bd1}, \eqref{e:bd2}.
\end{proof}
\begin{remark}The above result is the main reason why we introduced the concept of ``tidy". For non-tidy lattices, the counting result gets more complicated as demonstrated in Proposition 2.8 of \cite{saha-sup-minimal}.
\end{remark}

\section{Local families}\label{s:localfam}For each prime $p \in \f$, we let $\Pi(G_p)$ denote the set of isomorphism classes of
representations $\pi$ of $G_p$ that are irreducible, admissible, unitary, and if $p\nmid d$, also  infinite-dimensional.
Let $$\Aa_p = \{(\C v, \pi): \pi \in \Pi(G_p), 0 \neq v \in V_\pi\}.$$ \begin{definition}A \emph{local family} (over $G_p$) is a subset of $\Aa_p$.
\end{definition}
We will typically use $\F_p$ to denote a local family over $G_p$ and sometimes write the elements of $\F_p$ as $\F_p = \{(\C v_{i,p} , \pi_{i,p})_{i \in S_p}\}$ where $S_p$ denotes an indexing set.


\begin{definition}For each $p \in \f$, we let $\F_p^{\ur}$  denote the local family consisting of all the pairs $(\C v, \pi)$ such that $\pi \in \Aa_p$ has the unique $K_p$-fixed line $\C v$.
\end{definition}

For each $p \nmid d$, $\pi \in \Pi(G_p)$,  let $a(\pi) \in \Z_{\ge 0}$ denote the exponent in the conductor of $\pi$. We write $a_1(\pi) = \lceil \frac{a(\pi)}{2}\rceil.$

\begin{definition}A \emph{nice} local family over $G_p$ is a subset $\F_p$ of $\Aa_p$ with the following properties: \begin{enumerate}

\item If $p|d$ then $\F_p = \F_p^\ur$.

\item If $p\nmid d$, then $$\F_p \cap  \{(\C v, \pi): \pi \in \Pi(G_p), a(\pi)=0\} =  \F_p^\ur.$$

\end{enumerate}

\end{definition}

\begin{definition} A \emph{nice collection of local families} (or simply, a \emph{nice collection})   is a tuple of the form  $\F = (\F_p)_{p\in \f}$ such that  for each prime $p \in \f$, $\F_p$ is a nice local family over $G_p$.

\end{definition}

\begin{remark}   Note that a nice  local family does not have any ``old-vectors" originating from spherical (i.e., $K_p$-fixed) vectors. Furthermore, nice collections have no complications at the places dividing $d$.  We will restrict to nice families/collections for technical convenience and to get a cleaner statement of our main global theorem later on.
\end{remark}

The following definition quantifies the decay of matrix coefficient along a filtration of compact subsets,
needed for our main theorem.

\begin{definition}\label{d:uniformlycontrolled} Let $\eta_1$, $\eta_2$, $\delta$ be non-negative real numbers such that $\eta_1 \le \eta_2$. Let $\F=(\F_p)_{p\in \f}$  be a nice collection, and for each $p \in \f$, write $\F_p = \{(\C v_{i,p}, \pi_{i,p})_{i\in S_p}\}$ where $S_p$ is any indexing set for $\F_p$. We say that $\F$ is \emph{controlled} by $(\eta_1, \delta; \eta_2)$ if there exists $c\ge0$ ($c$ depending only on $\F, \eta_1, \eta_2$), and furthermore, for each $p\nmid d$ and $i \in S_p$ such that $a(\pi_{i,p})>0$, there exists an element $g_{i,p} \in G_p$,  so that using the shorthand \[v'_{i,p}:=\pi_{i,p}(g_{i,p})v_{i,p}, \quad \Phi'_{i,p}(g) = \frac{\langle \pi_{i,p}(g) v'_{i,p}, v'_{i,p}\rangle}{\langle v'_{i,p}, v'_{i,p} \rangle},\] the conditions (1), (2) below hold for each $p\in \f$, $p\nmid d$, $i \in S_p$ for which $a(\pi_{i,p})>0$,
    \begin{enumerate}
\item There exists a tidy order  $\O_{i,p} \subseteq \O_p^\m$, such that
    \begin{enumerate}
    \item $[\O_p^\m:\O_{i,p}] \ll  p^{\eta_1 a_1(\pi_{i,p}) + c(\eta_2 - \eta_1)}$,
    \item The $\pi_{i,p}$-action of $\O_{i,p}^\times$  on $v_{i,p}'$  generates an irreducible representation of dimension $\ll  p^{\delta a_1(\pi_{i,p})}$.
    \end{enumerate}

\item For each $\eta$ satisfying $\eta_1 < \eta \le \eta_2$, there exists a tidy lattice $\L^\eta_{i,p} \subseteq \O_{i,p}$, such that
    \begin{enumerate}

    \item $\L^{\eta'}_{i,p} \subseteq \L^{\eta}_{i,p}$ for all $\eta_1<\eta \le \eta' \le \eta_2$,

    \item $p^{\eta a_1(\pi_{i,p})-c} \ll [\O_p^\m:\L^\eta_{i,p}] \ll  p^{\eta a_1(\pi_{i,p})+c}$,

     \item If $g \in  \O_{i,p}^\times$, $g \notin \L^\eta_{i,p}$, we have $|\Phi'_{i,p}(g)| \ll p^{c+ (\eta-\eta_2) a_1(\pi_{i,p})}.$
    \end{enumerate}

\end{enumerate}

\end{definition}

\begin{remark}Suppose we have a collection $\F$ which is controlled by $(\eta_1, \delta; \eta_2)$. Then it is trivially true that  $\F$ is controlled by $(\eta_1, \delta; \eta_2')$ for any $\eta_1 \le \eta_2' \le \eta_2$. Therefore, whenever we assert that $\F$ is controlled by some $(\eta_1, \delta; \eta_2)$ we will try and ensure that we choose $\eta_2$ as large as possible (for those particular values of $\eta_1$ and $\delta$).
\end{remark}

\begin{remark}Suppose that $\F$ is controlled by $(\eta_1, \delta; \eta_2)$. Let us explore the possible range of values that $\eta_1, \eta_2, \delta$ can take. We assume for the purpose of this remark that for each prime $p$ either $\F_p = \F_p^\ur$ or the set $\{a(\pi_{i,p}): i \in S_p\}$ is unbounded.

We first focus on the implications of condition (1). Let $i\in S_p$ with $a(\pi_{i,p})>0$. Then, condition (1) implies that \begin{equation}\label{e:integral}\int_{\O_{i,p}^\times}|\Phi'_{i,p}(g)|^2 dg\gg p^{(-\delta - \eta_1) a_1(\pi_{i,p}) - c(\eta_2 - \eta_1)}.\end{equation}Now, it can be shown (by formal degree considerations) that for $\pi_{i,p}$ discrete series, the left hand side above is $\ll p^{-a_1(\pi_{i,p})}$. In fact, an explicit computation (performed in \cite{saha-sup-level-hybrid}) shows that the same holds for principal series. Therefore (by letting $i \rightarrow \infty$) we obtain the inequality \begin{equation}\label{e:inequality1} \eta_1 + \delta \ge 1.\end{equation} This inequality is sharp in the sense that there exist several collections $\F$ that satisfy condition (1) for some $\eta_1$, $\delta$ with $\eta_1 + \delta = 1$. Indeed, for many natural collections (including those that correspond locally to newvectors of trivial character, minimal vectors, and $p$-adic microlocal lifts)  one can choose the order $\O_{i,p} = \O^\m_p$  to ensure that the condition (1) of Definition \ref{d:uniformlycontrolled} holds with $\eta_1=0$, $\delta = 1$; see Proposition 2.13 of \cite{saha-sup-level-hybrid},   Section 1.4 and Remark 3.2 of \cite{saha-sup-minimal}, and Corollary A.3 of \cite{HN-minimal}.

Next we explore what is the possible range of values that $\eta_2$ can take given $\eta_1$ and $\delta$. Combining \eqref{e:integral} with condition (2) of Definition \ref{d:uniformlycontrolled} and the triangle inequality, a simple computation leads to \begin{equation}\label{e:inequality}\eta_2  \le \eta_1 + \delta.\end{equation} On the other hand suppose we have a collection $\F$ satisfying condition (1) of Definition \ref{d:uniformlycontrolled} for some $\eta_1$, $\delta$. Then, it is trivially true that $\F$ is controlled by $(\eta_1, \delta; \eta_1)$.

So, to \emph{summarize}, if a collection $\F$ satisfies condition (1) of Definition \ref{d:uniformlycontrolled} for some $\eta_1$, $\delta$, then \eqref{e:inequality1} holds, and if we then want to find some $\eta_2$ such that $\F$ is controlled by $(\eta_1, \delta; \eta_2)$, then any such $\eta_2$ must lie in the range $[\eta_1, \eta_1 + \delta]$. In this range, $\eta_2=\eta_1$ always works.
An interesting question, and one which we do not know the answer to, is the following: \emph{Suppose a collection satisfies condition (1) for some $\eta_1$, $\delta$ with $\delta>0$. Can we always find some $\eta_2>\eta_1$ such that $\F$ is controlled by $(\eta_1, \delta; \eta_2)$}?
\end{remark}

\begin{remark} In relation to the last remark, the main result of \cite{saha-sup-minimal} tells us that whenever a collection satisfies condition (1) of Definition \ref{d:uniformlycontrolled}  with $\eta_1=\eta_2$ and $\frac{\eta_1}3 + \frac{\delta}2 <\frac{1}2$, then we can break the \emph{local bound} for the sup-norms of the corresponding global automorphic forms. Unfortunately it is not always true that naturally occurring collections have this property.

The crucial \emph{new} ingredient in this paper is represented by the condition $(2)$, which posits a linear decay result for the matrix coefficient associated to a suitable translate of $v_{i,p}$. Whenever we can prove a quantitative decay of local matrix coefficients so that $\F$ is controlled by $(\eta_1, \delta; \eta_2)$ for some $\eta_2>\eta_1$, it will allow us (in our main global theorem, Theorem \ref{t:maingen} below) to improve upon the sup-norm estimate obtained from condition (1) alone.
\end{remark}

\begin{remark}The assumption  that the relevant lattices/orders in Definition \ref{d:uniformlycontrolled} are tidy is in order to get a cleaner statement of Theorem \ref{t:maingen} later on. However, this is not essential for our method and one could in principle omit from Definition \ref{d:uniformlycontrolled} the condition that the lattices are tidy. However, in that case, Proposition \ref{t:counting} would need to be modified and Theorem \ref{t:maingen} below would get  more complicated.
\end{remark}

\begin{remark}One could refine Definition \ref{d:uniformlycontrolled} by including the constant $c$ among the ``controlling" parameters, or by replacing $c$ with a function of $i$ and $p$. Any such hybrid definition can be used to make a refinement of Theorem \ref{t:maingen} below without much additional work. We avoid doing this in this paper in the interest of simplicity, and because our main focus is in the depth aspect.
\end{remark}

\begin{example}   For each $p\nmid 2d$, define the local family $\F_p^{\rm min, *}$ to be the union of $\F_p^\ur$ and all pairs $(\C v, \pi)$ such that  $\pi$ is a twist-minimal supercuspidal representation of $G_p$ satisfying $a(\pi) \not \equiv 2 \pmod{4}$ and $v$ is a minimal vector in $\pi$ in the sense of \cite{HN-minimal}. For $p|2d$, define $\F_p^{\rm min, *} = \F_p^\ur$. Let $\F^{\rm min, *}$ be the corresponding nice collection. Then by the results of \cite{HN-minimal}, $\F^{\rm min, *}$ is  controlled by $(1,0;1)$.
 Furthermore, it follows from Remark 3.2 of \cite{saha-sup-minimal} that $\F^{\rm min, *}$ is controlled by $(\gamma, 1-\gamma;1)$ for all $0\le \gamma \le 1$. So, this is an example where equality is attained in both \eqref{e:inequality1} and \eqref{e:inequality}.

\end{example}

\begin{definition}Let $p \nmid d$ be a prime. Define the nice local family $\F_p^{\rm new, *}$ to consist of all pairs $(\C v, \pi)$ with $\pi$ varying over the representations in $\Pi(G_p)$ with \emph{unramified} central character, and $\C v$ equal to the (unique) line generated by the local \emph{newvector}.
\end{definition}

The following result will follow from our work in Section \ref{s:local} of this paper.

\begin{proposition}\label{t:mainlocal}Let $\mathcal{G} = \{\G_p\}$ be the nice collection given by \begin{enumerate}

\item $\G_p = \F_p^{\rm new, *}$ if $p\nmid 2d$,

\item $\G_p = \F_p^\ur$ if $p|2d$.

\end{enumerate}
 Then $\G$ is controlled by $(0,1; \frac12)$.
\end{proposition}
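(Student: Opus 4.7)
The case $p\mid 2d$ is immediate: $\G_p=\F_p^\ur$ consists only of spherical pairs $(\C v,\pi)$ with $a(\pi)=0$, and the conditions in Definition~\ref{d:uniformlycontrolled} are vacuous for such pairs. The same observation disposes of the spherical members of $\F_p^{\rm new, *}$ when $p\nmid 2d$. Thus everything reduces to exhibiting, for each $p\nmid 2d$ and each $(\C v_{\rm new},\pi)\in \F_p^{\rm new, *}$ with $n:=a(\pi)\ge 1$ and unramified central character, data $\bigl(\O_{i,p},\,g_{i,p},\,\{\L^\eta\}_{0<\eta\le 1/2}\bigr)$ witnessing control by $(0,1;1/2)$.

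The plan is to take $\O_{i,p}=\O_p^\m$, which identifies with $M_2(\Z_p)$ under $\iota_p$ and makes condition~(1)(a) automatic with index $1$. The translate $g_{i,p}$ will be chosen as an Atkin--Lehner-type diagonal element (for example $\operatorname{diag}(p^{\lceil n/2\rceil},1)$, with modifications in the supercuspidal case) so that $v'_{i,p}=\pi_{i,p}(g_{i,p})v_{\rm new}$ becomes concentrated near the diagonal torus of $\GL_2(\Z_p)$. Correspondingly, $\L^\eta$ will be built as a nested family of tidy lattices forming $p$-adic tubes of width $\asymp p^{-\eta a_1(\pi)}$ around the diagonal torus inside $M_2(\Z_p)$, for instance $\L^\eta=\bigl\{\zxz{a}{b}{c}{d}\in M_2(\Z_p):b,c\in\p^{\lceil \eta a_1(\pi)/2\rceil}\bigr\}$. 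With such a construction, the inclusion~(2)(a), the index bound~(2)(b), and the tidiness requirement are all automatic by inspection of elementary divisors.

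The genuine local content lies in conditions~(1)(b) and~(2)(c). Condition~(1)(b) (with $\delta=1$) asks that the $\GL_2(\Z_p)$-orbit of $v'_{i,p}$ span an \emph{irreducible} $\GL_2(\Z_p)$-representation of dimension $\ll p^{a_1(\pi)}$; this should follow from the Jacquet--Casselman theory of newforms together with the chosen normalization, and be readable off an explicit $K_p$-type description of $v'_{i,p}$. Condition~(2)(c) requires the linear decay $|\Phi'_{i,p}(g)|\ll p^{c+(\eta-1/2)a_1(\pi)}$ of the translated matrix coefficient as $g$ moves away from the diagonal torus; this is exactly the content of Theorem~\ref{p:localnewmatrixmain}. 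Its proof, to be carried out in Section~\ref{s:local}, will proceed via an explicit ${}_2F_1$-hypergeometric integral representation of the Whittaker newvector, followed by $p$-adic stationary phase analysis, with the decay coming from cancellation away from the critical locus of the phase --- which turns out to be precisely the diagonal torus.

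The principal obstacle is therefore Theorem~\ref{p:localnewmatrixmain} itself: producing the linear decay uniformly across \emph{all} irreducible admissible representations of unramified central character, across all conductors $n$, and across all parameters $\eta\in(0,1/2]$, amounts to a delicate uniform oscillatory integral estimate in the $p$-adic setting. Once this local input is granted, the translation from the statement of Theorem~\ref{p:localnewmatrixmain} to Definition~\ref{d:uniformlycontrolled}(2)(c), together with the remaining essentially combinatorial conditions, will be straightforward bookkeeping on the lattices $\L^\eta$ described above.
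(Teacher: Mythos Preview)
Your overall strategy is essentially the paper's: translate the newvector by $a(\varpi^{n_1})$, use the lattices $\O(r)=\{\zxz{a}{b}{c}{d}:b,c\in\p^r\}$ for the filtration $\L^\eta$, and appeal to Theorem~\ref{p:localnewmatrixmain} for the decay. However, your choice $\O_{i,p}=\O_p^\m=M_2(\Z_p)$ creates a genuine gap in condition~(2)(c).

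With $\O_{i,p}=M_2(\Z_p)$ and $\L^\eta=\O(\lceil \eta n_1/2\rceil)$, the smallest $\eta>0$ gives $\L^\eta=\O(1)$, which is \emph{strictly} contained in $\O_{i,p}$. Condition~(2)(c) then demands $|\Phi'(g)|\ll p^{c-n_1/2}$ for every $g\in\GL_2(\Z_p)\setminus K^*(1)$, i.e.\ for $g$ with a \emph{unit} off-diagonal entry. But Theorem~\ref{p:localnewmatrixmain} (and its reformulation Proposition~\ref{prop:mainlocal}(b)) only treats $g\in K^*(1)$. Concretely, if $v(c)=0$ and (say) $n$ is even, then after translation one lands in the coset with parameter $i=n_1=n_0$, which lies \emph{outside} the range $n_0<i<n-1$ of Theorem~\ref{p:localnewmatrixmain}(1); neither part of that theorem yields the required bound there. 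So your claim that ``this is exactly the content of Theorem~\ref{p:localnewmatrixmain}'' is not justified at the boundary.

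The paper sidesteps this by taking $\O_{i,p}=\iota_p^{-1}(\O(1))$ rather than $\O_p^\m$, and $\L^\eta=\iota_p^{-1}(\O(j+1))$ with $j=\lfloor n_1\eta/2\rfloor$. Then for small $\eta$ one has $\L^\eta=\O_{i,p}$, making~(2)(c) \emph{vacuous} precisely in the problematic range, while for larger $\eta$ only $g\in K^*(1)$ are in play and Proposition~\ref{prop:mainlocal}(b) applies. The price is that $[\O_p^\m:\O(1)]=p^2$, but this is uniformly absorbed into the constant $c$ in~(1)(a). Condition~(1)(b) is then verified for the $K^*(1)$-action (Proposition~\ref{prop:mainlocal}(a)), which is all that is needed. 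Your remark about ``modifications in the supercuspidal case'' for $g_{i,p}$ is also unnecessary: the paper uses the same $g_{i,p}=\iota_p^{-1}(a(\varpi^{n_1}))$ uniformly.
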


\begin{remark} Roughly speaking, Proposition \ref{t:mainlocal} asserts (among other things) that for each fixed odd prime $p$ and each local representation $\pi_p$ of $\GL_2(\Q_p)$ with $a_1(\pi_p)=n_1$, there is a certain translate $v'$ of the newform whose associated matrix coefficient $\Phi'(g)$ is bounded by $p^{-n_1/2} \ [\O_p^\m:\L^\eta_p]$ at matrices $g \notin \L^\eta_p$, where $\{\L_p^{\eta}\}_{0 \le \eta \le \frac12} $ is  a suitable filtration of lattices in $\O_p^\m$ such that $[\O_p^\m:\L^\eta_p] \asymp p^{\eta n_1 + O(1)}$.

However, what we will end up proving in Section  \ref{s:local} is the stronger statement that the matrix coefficient $\Phi'(g)$ is bounded by $p^{-n_1/2} \ [\O_p^\m:\L^\eta_p]^{1/4}$ at such matrices.

Unfortunately, this stronger bound does not help in improving the exponent $5/24$ in Theorem A. This is essentially because both the above bounds coincide when  $[\O_p^\m:\L^\eta_p] \asymp 1$.
\end{remark}

\begin{remark}\label{rem:localsmallexclude}Let $k_0$ be some \emph{fixed} non-negative integer. For each prime $p$  not dividing $2d$ consider the subset of $\F_p^{\rm new, *}$ consisting of the pairs $(\C v_i, \pi_i) \in \F_p^{\rm new, *}$ where $a(\pi_i) \le k_0$. Then letting $g_{i,p} = \iota_p^{-1}\mat{p^{a_1(\pi_i)}}{}{}{1}$, and $\L^\eta_{i,p} = \O^\m_{i,p}$, we see that the conditions in Definition \ref{d:uniformlycontrolled} hold (trivially) for $\eta_1 =0$, $\delta=1$, $\eta_2 = \frac12$, with the constant $c$  equal to $\frac{k_0}2$. So, in order to prove Proposition \ref{t:mainlocal}, it suffices to restrict our attention only to representations $\pi_i$ with $a(\pi_i)>k_0$. We will use this with $k_0 = 2$ in Section \ref{s:local} when we prove the above Proposition.

Furthermore, for the proof of Proposition \ref{t:mainlocal}, it suffices to restrict ourselves only to the pairs $(\C v, \pi) \in \F_p^{\rm new, *}$ where $\pi_i$ has \emph{trivial} central character. This is because any unitary representation of $\GL_2(\Q_p)$ with unramified central character can be twisted by $| \det(g) |_p^s$ for some suitable $s \in i\R$ to make it have trivial central character; the twisting action in this case takes newforms to newforms, and the matrix coefficients etc., remain the same.
\end{remark}

\begin{remark}We suspect that Proposition \ref{t:mainlocal} continues to hold for the larger collection where we allow a) $p=2$, and b) replace the condition of unramified central character with more general central characters. However, for simplicity, we restrict ourselves to this case.
\end{remark}

\section{The main global result}Throughout this section, we will use the notations defined in Section \ref{s:globalstatement} and Section \ref{s:localfam}.
\subsection{Global families}\label{s:globalfamilies} We let $\Pi(G)$ denote the set of irreducible, unitary, cuspidal automorphic
representations of $G(\A)$. For any $\pi = \otimes_v \pi_v$ in $\Pi(G)$, we let $C(\pi)=\prod_{p\nmid d} p^{a(\pi_p)}$ denote the conductor\footnote{The conductor of $\pi$ equals $C(\pi)\prod_{p|d}p^{a(\pi_p)}$; thus  $C(\pi)$ denotes the ``away from $d$" part of the conductor of $\pi$. For $p|d$, $a(\pi_p)$ can be defined via the local Jacquet-Langlands correspondence; in particular, this gives $a(\pi_p)=1$ if $p|d$ and $\pi_p$ is one-dimensional.}   of $\otimes_{p\nmid d}\pi_p$, and we identify $V_\pi$ with a (unique) subspace of functions on $G(\A)$ so that $\pi(g)$ coincides with the right-regular representation $R(g)$ on that subspace. We define the integer $C_1(\pi)$ as in \eqref{A1}; i.e., $C_1(\pi)$ is  the smallest integer such that $C(\pi)$ divides $C_1(\pi)^2$. For any $\pi \in \Pi(G)$, define $$\ram(\pi) = \{p \in \f: p|C(\pi)\} = \{p \in \f: p \nmid d, \  \pi_p \text{ has no } K_p\text{-fixed line}\},$$ $$C'(\pi) =\prod_{p \in \ram(\pi)} p.$$

  We denote $$\Aa(G) = \{(\C \phi, \pi): \pi \in \Pi(G), 0 \neq \phi \in V_\pi\}.$$ If $\phi$ is a function such that $(\C \phi, \pi) \in \Aa(G)$, then  $|\phi|$ is left $Z(\A)G(\Q)$ invariant and hence we  define $\|\phi\|_2$ as in \eqref{e:l2norm}. For any such  $\phi$, we say that $\phi$ is \emph{factorizable} if $\phi$ corresponds to a pure tensor under the isomorphism\footnote{Such an isomorphism is unique up to scalar multiples, and we fix a choice of isomorphism once and for all.} $\pi \simeq \otimes_{v} \pi_v$, in which case we write $\phi = \otimes_v \phi_v$ with $\phi_v$ a vector in $\pi_v$.

\begin{definition}\label{d:arch} For $(\C\phi, \pi) \in \Aa(G)$, and $T>0$, we say that the archimedean parameters of $(\C\phi, \pi)$ are bounded by $T$ if the following two conditions hold: a) the analytic conductor $\q_\infty(\pi_\infty)$ (see \cite[p. 95]{MR2061214} for the definition) of $\pi_\infty$ satisfies $\q_\infty(\pi_\infty) \le T$, and b) the weight-vector decomposition of $\phi$ under the action of $\iota_\infty^{-1}(\SO(2))$ involves only weights $k$ such that $|k|\le T$.
\end{definition}

\begin{remark}\label{rem:kinftyfinite}Let $\phi$ be a cuspidal automorphic form on $G(\A)$ that generates some representation $\pi \in \Pi(G)$. Then it is easy to see that  $(\C\phi, \pi)$ has its archimedean parameters bounded by \emph{some} $T$ (since the usual definition of an automorphic form implies that $\phi$ is $K_\infty$-finite).
\end{remark}

\begin{definition}Given a nice collection $\F=(\F_p)_{p \in \f}$ of local families, we define the corresponding global family of automorphic forms $\Aa(G; \F)$ as follows: $$\Aa(G; \F) = \{(\C\phi, \pi) \in \Aa(G): \phi =\otimes_v \phi_v \text{ is factorizable, } (\C\phi_p, \pi_p) \in \F_p \text{ for all } p \in \f\}.$$ \end{definition}

 \begin{definition}For each $T>0$ we let $\Aa(G; \F, T) \subset \Aa(G; \F)$ consist of all the $(\C\phi, \pi)$ in $\Aa(G; \F)$  whose archimedean parameters are bounded by $T$.
 \end{definition}

 \begin{remark}Suppose that $\F$ is a nice collection and $(\C\phi, \pi) \in \Aa(G; \F,T)$. Then our definition of a nice collection implies that $$\{p \in \f: \phi_p \text{ is not } K_p\text{-fixed}\} = \ram(\pi).$$
\end{remark}

 \subsection{Statement of the main theorem}

 We can now state the master theorem of this paper.

\begin{theorem}\label{t:maingen}Let $\eta_1$, $\eta_2$, $\delta$, be non-negative real numbers such that $\eta_1 \le \eta_2$. Let $\F=(\F_p)_{p \in \f}$ be a nice collection that is controlled by $(\eta_1, \delta; \eta_2)$.  Then there is a non-negative constant $x$ depending only on $\F$, $\eta_1$, $\eta_2$ (we can take $x=0$ if $\eta_1 = \eta_2$) such that for all  $(\C\phi, \pi) \in \Aa(G; \F, T)$  we have  $$\sup_{g \in G(\A)} |\phi(g) | \ll_{T,  \eps} C'(\pi)^x C_1(\pi)^{\frac{\delta}{2} + \frac{\eta_1}{2} - \frac{\eta_2}6 +\eps} \|\phi \|_2.$$
\end{theorem}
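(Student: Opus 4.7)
The plan is to deploy the amplified pre-trace formula, the crucial new ingredient being to decompose the geometric side along the filtration of tidy lattices $\{\L^\eta_{i,p}\}$ supplied by Definition~\ref{d:uniformlycontrolled}(2). Write $N = C(\pi)$, $N_1 = C_1(\pi)$. After right-translating $\phi$ by $\prod_p g_{i,p}^{-1}$ (which preserves both $\|\phi\|_2$ and $\sup|\phi|$), we may assume $\phi_p = v'_{i,p}$ at each $p \in \ram(\pi)$; fix the point $g_0 \in G(\A)$ at which $|\phi|$ is to be bounded. Construct a global test function $\kappa = \kappa_\infty \cdot \kappa_\mathrm{amp} \cdot \prod_{p \in \ram(\pi)} \kappa_p \cdot \prod_{p \mid d} \mathbf{1}_{K_p}$ as follows: at each $p \in \ram(\pi)$ take $\kappa_p = \alpha_{i,p}\, \overline{\Phi'_{i,p}}\, \mathbf{1}_{\O_{i,p}^\times}$ with $\alpha_{i,p}$ normalized so that $R(\kappa_p)v'_{i,p} = v'_{i,p}$ (condition~(1) of Definition~\ref{d:uniformlycontrolled} together with Schur orthogonality then gives $\alpha_{i,p} \ll p^{(\delta + \eta_1) a_1(\pi_p) + O(1)}$); at places away from $Nd$, insert the standard Iwaniec--Sarnak Hecke amplifier $\kappa_\mathrm{amp}$ of length $L$ (to be optimized) supported on $T_\ell, T_{\ell^2}$ with $\ell \asymp L$; and at $\infty$, take a fixed compactly supported $\kappa_\infty$ centred at $g_{0,\infty}$ that approximately projects onto the $\pi_\infty$-isotypic component, contributing $T^{O(1)}$ while confining $\gamma_\infty$ on the geometric side to a bounded hyperbolic neighbourhood of $g_{0,\infty}$.

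Applying the pre-trace formula to $\kappa \ast \kappa^\ast$, positivity and the amplifier property ($|\lambda_\mathrm{amp}(\pi)|^2 \gg L^{1-\eps}$) give
$$L^{1-\eps} \, \frac{|\phi(g_0)|^2}{\|\phi\|_2^2} \ \ll_{T,\eps}\ \sum_{\gamma \in G(\Q)/Z(\Q)} (\kappa \ast \kappa^\ast)(g_0^{-1}\gamma g_0),$$
the right side being supported on $\gamma$ with $\gamma_\infty$ in a bounded region, $\gamma_p \in \O_{i,p}^\times$ for $p \in \ram(\pi)$, and $\nr(\gamma) \in \{m, m^2\}$ for some $1 \le m \ll L^{O(1)}$. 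The decisive new step is to decompose this sum by the filtration: discretize $[\eta_1, \eta_2]$ into $O(\log N)$ levels and, for each $p \in \ram(\pi)$, classify $\gamma$ by the least $\eta_p$ with $\gamma_p \in \L^{\eta_p}_{i,p}$. On a shell with parameters $\{\eta_p\}_{p \in \ram(\pi)}$, Definition~\ref{d:uniformlycontrolled}(2)(c) yields the pointwise size bound
$$|\kappa(g_0^{-1}\gamma g_0)| \ll_T C'(\pi)^{O(1)} L^{O(1)} \prod_{p \in \ram(\pi)} p^{(\delta + \eta_1 + \eta_p - \eta_2)a_1(\pi_p) + O(1)},$$
while the $\gamma$ in the shell all lie in the tidy global lattice of index $\prod_p p^{\eta_p a_1(\pi_p) + O(1)}$ in $\O^\m$ cut out by $\{\L^{\eta_p}_{i,p}\}$, to which Proposition~\ref{t:counting} applies directly.

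Summing the geometric series over each $\eta_p$ (dominated by its endpoints $\eta_1$ and $\eta_2$), over the Hecke index $m$, and multiplying over primes, the geometric side admits a bound of the shape
$$\ll_{T,\eps} C'(\pi)^{O(1)} N_1^\eps \bigl( L^{1-\eps} N_1^{\delta + \eta_1} + L^{A} N_1^{\delta + \eta_1 - \eta_2}\bigr),$$
for an appropriate exponent $A$ read off from the off-diagonal counting in Proposition~\ref{t:counting}; dividing by the amplifier saving $L^{1-\eps}$, optimizing $L$ to equate the two terms (or taking $L=1$ when that would force $L<1$), and taking square roots yields the claimed exponent $\frac{\delta+\eta_1}{2} - \frac{\eta_2}{6}$ for $|\phi(g_0)|/\|\phi\|_2$. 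The main technical obstacle lies in the simultaneous bookkeeping when several primes ramify: one must verify that the composite global lattice cut out by the local shell conditions remains tidy (so Proposition~\ref{t:counting} applies cleanly), that the matrix coefficient decay estimates combine multiplicatively across primes without accumulating loss, and that the $O(\log N)$ discretization error on each $\eta_p$-axis is absorbed into the $N_1^\eps$ factor rather than into the main exponent.
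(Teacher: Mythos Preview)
Your approach is essentially the same as the paper's: translate to the vectors $v'_{i,p}$, take the ramified test function to be (a normalization of) the matrix coefficient restricted to $\O_{i,p}^\times$, amplify at the unramified places, decompose the geometric side along the lattice filtration $\{\L^{\eta}_{i,p}\}$, bound each shell by combining the decay estimate from Definition~\ref{d:uniformlycontrolled}(2)(c) with the counting in Proposition~\ref{t:counting}, and optimize the amplifier length. The paper does exactly this (Section~\ref{s:amplglobal}), with the minor difference that it applies the pre-trace formula to $\kappa$ directly rather than to $\kappa\ast\kappa^\ast$ (positivity there comes from the fact that $\kappa_S$ and $\kappa_\infty$ are themselves built from matrix coefficients).

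One numerical point in your write-up does not close as stated. You assert $|\lambda_{\mathrm{amp}}(\pi)|^2 \gg L^{1-\eps}$ and then display a geometric bound whose first term is $L^{1-\eps} N_1^{\delta+\eta_1}$; dividing the latter by the former leaves $N_1^{\delta+\eta_1}$, which is just the local bound and yields no saving regardless of $L$. The paper's bookkeeping (equations \eqref{kappaur}, \eqref{keyineqha3}, \eqref{keyineq2}) gives, in your normalization, an amplifier saving of order $L^{2-\eps}$ against a diagonal geometric term of order $L\cdot N_1^{\delta+\eta_1}$, so that after division the first term is $L^{-1}N_1^{\delta+\eta_1}$ and optimization against the off-diagonal term produces the stated exponent $\tfrac{\delta+\eta_1}{2}-\tfrac{\eta_2}{6}$. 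Once this slip is corrected, your argument and the paper's coincide.
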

The above Theorem can be viewed as a generalization of Theorem 1 of \cite{saha-sup-minimal}, which dealt with the special case\footnote{Note however that in \cite{saha-sup-minimal} we  did not assume that the relevant orders are tidy.} $\eta_1 = \eta_2$; in this special case, condition (2) of Definition \ref{d:uniformlycontrolled} is vacuous and does not play any part.
\begin{remark}In previous sup-norm papers such as \cite{HNS, sahasupwhittaker}, we often restricted to automorphic forms which corresponded classically to Hecke eigenforms that are either Maass cusp forms of weight $0$ or holomorphic cusp forms of weight $k$. Definition \ref{d:arch} above (see also Remark \ref{rem:kinftyfinite}) allows us to state Theorem \ref{t:maingen} for much more general automorphic forms.
\end{remark}

\begin{remark}As mentioned earlier, for many nice collections, the condition (1) of Definition \ref{d:uniformlycontrolled} holds with $\eta_1=0$, $\delta = 1$. This gives us the ``local bound"  \begin{equation}\label{e:localbd}\sup_{g \in G(\A)} |\phi(g) | \ll_{T, \eps} C_1(\pi)^{1/2 +\eps} \|\phi \|_2\end{equation} for any $\phi$ belonging to the corresponding global family of automorphic forms. Theorem \ref{t:maingen} gives us a pathway to go beyond \eqref{e:localbd} in this case whenever we can prove the existence of some $\eta_2 >  0$ for which condition (2) of Definition \ref{d:uniformlycontrolled} holds.

That this can indeed be done (with $\eta_2 = \frac12$) for the collection corresponding to global \emph{newforms} of odd conductor and trivial character is precisely the content of Proposition \ref{t:mainlocal}. This leads to the following corollary.
\end{remark}

\begin{corollary}\label{c:mainnewforms} Let $\mathcal{G}$ be as in Proposition \ref{t:mainlocal}. Let $C$ be a positive integer such that $(C, 2d)=1$, and let $C'$ be the product of all the primes dividing $C$. Let $(\C\phi, \pi) \in \Aa(G; \mathcal{G})$ and assume that \begin{enumerate}
\item $C(\pi)=C$,
\item $\phi_\infty$ is a vector of weight $k$ in $\pi_\infty$.
\end{enumerate}
Then we have
\begin{equation}\label{e:cormain}
\sup_{g \in G(\A)} |\phi(g) | \ll_{k, \pi_\infty,  \eps} (C')^{O(1)}C^{\frac{5}{24} + \eps}\|\phi\|_2. \end{equation}
\end{corollary}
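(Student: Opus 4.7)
The plan is entirely bookkeeping: I would combine the master theorem (Theorem \ref{t:maingen}) with the local input (Proposition \ref{t:mainlocal}) and carefully translate conductors. First, Proposition \ref{t:mainlocal} tells us that the nice collection $\mathcal{G}$ is controlled by $(\eta_1,\delta;\eta_2)=(0,1;\tfrac12)$. Plugging these values into the exponent of $C_1(\pi)$ in Theorem \ref{t:maingen} gives
\[
\frac{\delta}{2}+\frac{\eta_1}{2}-\frac{\eta_2}{6}=\frac{1}{2}-\frac{1}{12}=\frac{5}{12},
\]
so Theorem \ref{t:maingen} produces a non-negative constant $x$ (depending only on $\mathcal{G}$, $\eta_1$, $\eta_2$, hence absolute here) such that for every $(\C\phi,\pi)\in \Aa(G;\mathcal{G},T)$
\[
\sup_{g\in G(\A)}|\phi(g)|\ll_{T,\eps} C'(\pi)^{x}\, C_1(\pi)^{\tfrac{5}{12}+\eps}\,\|\phi\|_2.
\]

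Next I would translate $C_1(\pi)$ into $C$. By the definition in \eqref{A1}, if $C=\prod p^{a_p}$ then $C_1(\pi)=\prod p^{\lceil a_p/2\rceil}$, so
\[
C_1(\pi)^{2}\;\le\;\Bigl(\prod_p p^{a_p}\Bigr)\cdot\Bigl(\prod_{p\mid C} p\Bigr)\;=\;C\cdot C',
\]
since the ``ceiling loss'' contributes at most one extra factor of $p$ for each ramified prime. Raising to the $5/24$-th power yields $C_1(\pi)^{5/12}\le C^{5/24}(C')^{5/24}$, and absorbing these into the existing $C'(\pi)^x=(C')^x$ factor gives
\[
\sup_{g\in G(\A)}|\phi(g)|\ll_{T,\eps}(C')^{x+\tfrac{5}{24}+\eps}\,C^{\tfrac{5}{24}+\eps}\,\|\phi\|_2,
\]
which is precisely \eqref{e:cormain} with an $O(1)$ polynomial dependence on $C'$.

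Finally, I would deal with the archimedean parameter. By hypothesis $\phi_\infty$ has weight $k$ and $\pi_\infty$ is fixed, so setting $T:=\max(\mathfrak{q}_\infty(\pi_\infty),|k|)$ ensures $(\C\phi,\pi)\in \Aa(G;\mathcal{G},T)$ in the sense of Definition \ref{d:arch}. Since this choice of $T$ is determined by $k$ and $\pi_\infty$, the implied constant $\ll_{T,\eps}$ becomes $\ll_{k,\pi_\infty,\eps}$, finishing the proof.

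The main obstacle is not in the corollary itself, which is a one-page deduction, but rather hidden in the two ingredients invoked: Theorem \ref{t:maingen}, whose proof (in Section \ref{s:amplglobal}) requires the full amplification-with-filtration machinery together with the counting Proposition \ref{t:counting}, and Proposition \ref{t:mainlocal}, which rests on the delicate $p$-adic stationary phase analysis of Section \ref{s:local}. Once those are in hand, the only arithmetic subtlety in this corollary is the bookkeeping step $C_1(\pi)^2\le C\cdot C'$, which accounts precisely for why an extra polynomial factor in $C'$ (rather than a clean bound in $C$ alone) must appear.
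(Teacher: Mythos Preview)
Your proposal is correct and follows exactly the same approach as the paper: invoke Proposition \ref{t:mainlocal} to see that $\mathcal{G}$ is controlled by $(0,1;\tfrac12)$, choose $T$ depending only on $k$ and $\pi_\infty$ so that $(\C\phi,\pi)\in\Aa(G;\mathcal{G},T)$, and apply Theorem \ref{t:maingen}. The paper compresses all of this into three sentences and leaves the conversion $C_1(\pi)^{5/12}\le (C')^{O(1)}C^{5/24}$ implicit, whereas you have spelled out the bookkeeping step $C_1(\pi)^2\le C\cdot C'$ explicitly; this is a welcome elaboration rather than a different argument.
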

\begin{proof} Clearly $\phi$ belongs to $\Aa(G; \mathcal{G}, T)$ for $\mathcal{G}$ as given by Proposition \ref{t:mainlocal} and $T$ depending only on $\pi_\infty$ and $k$. By Proposition \ref{t:mainlocal}, $\mathcal{G}$ is controlled by $(0, 1; \frac12)$. Now the result follows from Theorem \ref{t:maingen}.
\end{proof}

\begin{remark} It will be clear from the results of Section \ref{s:local}  that the exponent of $C'$ implicit in Corollary \ref{c:mainnewforms} is effective and can be written down explicitly.
\end{remark}

\subsection{The proof of Theorem \ref{t:maingen}}\label{s:amplglobal}
In this subsection, we complete the proof of Theorem \ref{t:maingen}. The case $\eta_1 = \eta_2$ is a direct corollary of Theorem 1 of \cite{saha-sup-minimal}. So throughout this proof we will assume that $\eta_2 > \eta_1$.

Let $\F$ be a nice collection that is controlled by $(\eta_1, \delta; \eta_2)$.
 Let $(\C\phi, \pi) \in \Aa(G; \F, T)$ be such that $\langle \phi, \phi \rangle =1$. Furthermore, we assume without loss of generality that $\phi$ is a weight vector, i.e., there exists some integer $k$ such that $|k| \le T$ and for all $g \in G(\A)$, \begin{equation}\label{weighteq}\phi\left( g \left( \iota_\infty^{-1}\mat{\cos(\theta)}{\sin(\theta)}{-\sin(\theta)}{\cos(\theta)}\right) \right) = e^{ik \theta}\phi(g).\end{equation}

 Henceforth we drop the index $i$ (since we are dealing with a particular $\phi$). Thus,  for each prime $p \in \ram(\pi)$,  the vector $v_{i,p}$ occurring in Definition \ref{d:uniformlycontrolled} is the vector $\phi_p$ in $\pi_p$ in the current setup. We let $\phi'_p$  be the local translate of $\phi_p$  that corresponds to $v'_{i,p}$ from  Definition \ref{d:uniformlycontrolled} for $p \in \ram(\pi)$; we define $\phi'_p = \phi_p$ for $p \notin \ram(\pi)$. We let $\phi'$ be the automorphic form on $G(\A)$ under the fixed isomorphism $\pi = \otimes_v \pi_v$. Then, the automorphic form $\phi'$ is just a translate of $\phi$ by a certain element of $G(\A_\f)$. Therefore, $\|\phi'\|_2 = \|\phi\|_2 =1$ and $\sup_{g \in G(\A)}|\phi'(g)| = \sup_{g \in G(\A)}|\phi(g)|$. Henceforth we will just work with $\phi'$.

 Given some $p \in \ram(\pi)$ and some $\eta_p$ such that $\eta_1 < \eta_p \le \eta_2$, let $\O_p$ and $\L^{\eta_p}_p$ satisfy the relevant conditions of Definition \ref{d:uniformlycontrolled}. For notational convenience, we henceforth denote $\L_p^{\eta_1} = \O_p$ for each $p \in \ram(\pi)$, so that $\L^{\eta_p}_p$ makes sense for the entire range $\eta_1 \le \eta_p \le \eta_2$

  Let $\O$ be the global order in $D$ corresponding to the collection of local orders $\{\O_p\}_{p \in \ram(\pi)}$. For any $\ram(\pi)-$\emph{tuple }$H = (\eta_p)_{p \in \ram(\pi)}$ with each $\eta_p$ chosen such that $\eta_1 \le \eta_p \le \eta_2$, let $\L^H$ be the global lattice such that $(\L^H)_{p} = \L^{\eta_p}_p$ if $p \in \ram(\pi)$ and $(\L^H)_{p} = \O^\m_p$ if $p \notin \ram(\pi)$.  Note that $\L^H\subseteq \O \subseteq \O^\m$ and the lattice ${}^g\L^H$ is tidy in $\O^\m$ for all choices of $H$ and all $g \in K_{\O^\m}$. We put $N=[\O^\m: \O]$, $N^H=[\O^\m: \L^H]$ and note that $N^H=N$ if $\eta_p=\eta_1$ for all $p \in \ram(\pi)$. By our assumptions (see Definition \ref{d:uniformlycontrolled}) we have \begin{equation}\label{e:levelbds}N^H \ll {C'(\pi)^{O(1)}} C_1(\pi)^{\eta_p}, \quad  N/N^H \ll {C'(\pi)^{O(1)}}C_1(\pi)^{\eta_1-\eta_p}. \end{equation}

Let $\J$ be a fixed (compact) fundamental domain for the action of $$\Gamma_{\O^\m} = \{ \gamma \in \iota_\infty(\O^\m), \ \det(\gamma)=1\}$$ on $\H$.  In order to prove Theorem \ref{t:maingen}, it suffices to prove that  \begin{equation}\label{e:toprove}|\phi'(g)| \ll_{T, \eps}C'(\pi)^{O(1)} C_1(\pi)^{\frac{\delta}{2} + \frac{\eta_1}{2} - \frac{\eta_2}6 +\eps} \end{equation} for all $g = \prod_v g_v \in G(\A)$ satisfying \begin{equation}\label{e:toprove2}
g_p \in K_p  \text{ for all } p \in \f, \quad \det(\iota_\infty(g_\infty))>0, \quad \text{ and }  \iota_\infty(g_\infty)(i) \in \J.
\end{equation}  This is because any element of $G(\A)$ can be left-multiplied by a suitable element of $Z(\A)G(\Q)$ so that $g$ has the above property.

The rest of this subsection is devoted to proving \eqref{e:toprove}.

\subsubsection*{Test functions}

We define a test function $\kappa$ on $G(\A)$, which will be essentially the same as the one used in \cite{saha-sup-minimal}.
Let $S = \ram(\pi) \cup \{p \in \f: p|d\}$.
Let $\ur = \f \setminus S$ be the set of primes not in $S$. We will choose $\kappa$ of the form $\kappa = \kappa_S \kappa_\ur  \kappa_\infty$. For convenience, we denote $G_S = \prod_{p \in S}G_p$, $\Q_S^\times = \prod_{p \in S} \Q_p^\times$, and $\O_S^\times = \prod_{p\in S}\O_p^\times$. By assumption, the action of $\O_S^\times$ on $\phi'$ generates an irreducible representation of dimension $\ll_{\eps} C_1(\pi)^{\delta+ \eps}$.

We define the function $\kappa_S$ on $G_S$ as follows:
$$\kappa_S (g_S)=  \begin{cases}  0 &\text{ if } g_S \notin  \Q_S^\times\O_S^\times, \\ \omega_{\pi}^{-1}(z) \langle \phi', \pi(k) \phi' \rangle &\text{ if }  g_S=  zk, \quad z \in \Q_S^\times, \ k \in  \O_S^\times. \end{cases}$$

Then  as in Section 4.1 of \cite{saha-sup-minimal}, we have  \begin{equation}\label{kappaN}R(\kappa_S)\phi':=\int_{\Q_S^\times \bs G_S} \kappa_S(g)(\pi(g)\phi')  \ dg = \lambda_S \phi', \quad \text{ where } \lambda_S \gg_{\eps} \frac{1}{NC_1(\pi)^{\delta +\eps}}. \end{equation}

Next we move on to the primes in $\ur$. We define $\kappa_\ur$ exactly as in Section 4.1 of \cite{saha-sup-minimal}. The definition of $\kappa_\ur$ depends on a parameter $\Lambda$ that we will fix later. As shown in \cite{saha-sup-minimal}, \begin{equation}\label{kappaur}R(\kappa_\ur )\phi' = \lambda_\ur \phi', \quad \lambda_\ur \gg_\eps \Lambda^{2-\eps}.\end{equation}

Finally, we consider the infinite place. As we are not looking for a
bound in the archimedean aspect, the choice of $\kappa_\infty$ is
unimportant. However for definiteness, let us fix the  function
$\kappa_\infty$ as follows. Let $f:\R_{\ge 0} \rightarrow [0,1]$ be a smooth non-increasing function such that $f(x)=1$ if $x \in [0, \frac12]$ and $f(x)=0$ if $x \ge 1$. Let $g \in \GL_2(\R)^+$, and define $u(g) = \frac{|g(i) - i|^2}{4 \Im(g(i))}$. Define $$\kappa_\infty(g) = f(u(g)) \langle \phi' , \pi(g) \phi'\rangle,  $$ for $g \in  \GL_2(\R)^+$ and define $\kappa_\infty$ to be equal to identically zero on $\GL_2(\R)^-$. Then we have that $$\kappa_\infty(g) \neq 0 \Rightarrow
\det(\iota_\infty(g_\infty))>0, \ u(\iota_\infty(g_\infty)) \le 1$$ and furthermore the operator $R(\kappa_\infty)$ satisfies
\begin{equation}\label{kappainf}R(\kappa_\infty)\phi' =
\lambda_\infty \phi', \quad \lambda_\infty \gg_{T} 1.\end{equation}

 We define the automorphic kernel $K_\kappa(g_1, g_2)$ for $g_1, g_2 \in G(\A)$
via $$K_\kappa(g_1, g_2) = \sum_{\gamma \in  G'(\Q)} \kappa(g_1^{-1} \gamma
g_2).$$

Now, as in Section 4.2 of \cite{saha-sup-minimal}, we get

\begin{equation}\label{keyineq1} |\phi'(g)|^2 \ll_{T, \eps} NC_1(\pi)^{\delta + \eps}\Lambda^{-2 + \eps}   K_\kappa(g,g).\end{equation}

On the other hand, we have by construction \begin{equation}\label{keyineqha}K_\kappa(g,g) \le \sum_{1 \le \ell \le 16\Lambda^4} \frac{y_\ell}{\ell^{1/2}}\sum_{\substack{\gamma \in G'(\Q)\\ \kappa_\ell(\gamma) \neq 0 \\ \kappa_\infty(g_\infty^{-1}\gamma_\infty g_\infty) \neq 0} } \left|\kappa_S(g_S^{-1}\gamma_S g_S) \right|,\end{equation} where the $y_\ell$ satisfy \begin{equation}\label{e:ylcases}|y_\ell| \ll \begin{cases}\Lambda, & \ell=1,\\1, &\ell = \ell_1 \ell_2 \text{ or } \ell = \ell_1^2\ell_2^2 \text{ with } \ell_1, \ell_2 \in \P, \\ 0, &\text{otherwise,}\end{cases}\end{equation} with $\P= \{\ell: \ell \text{ prime, } \ell \in \ur, \ \Lambda \le \ell \le 2\Lambda\}$ and where $\kappa_\ell= \prod_{p \in \ur}\kappa_{\ell, p}$ is a function on $\prod_{p \in \ur} G(\Q_p)$ that is defined in Section 3.5 of \cite{saha-sup-level-hybrid} (see also Section 4.1 of \cite{saha-sup-minimal}); we recall that $\kappa_{\ell, p}$ is supported on $\Q_p^\times \O_p(\ell)$ where $\O_p(\ell) = \{ \alpha \in \O_p: \nr(\alpha) \in \ell \Z_p^\times\}$.

Let us look at \eqref{keyineqha} more carefully. First of all, note that if $\kappa_\ell(\gamma) \kappa_\infty(g_\infty^{-1}\gamma_\infty g_\infty) \neq 0$ then

\begin{enumerate}
\item [(a)] $\gamma_p \in \Q_p^\times \O_p(\ell) \quad \forall p \in \ur$,
\item [(b)] $\det(\iota_\infty(\gamma_\infty))>0$, $u(z, \iota_\infty(\gamma_\infty) z) \le 1$, where $z = g_\infty i$.
    \end{enumerate}

 Looking at the primes $p |d$ we see that $\kappa_S(g_S^{-1}\gamma_S g_S) \neq 0$ implies that

\begin{enumerate}
\item [(c)] $\gamma_p \in \Q_p^\times \O_p^\times \quad \forall p |d$.
    \end{enumerate}

(We remind the reader here that $\O_p = \O^\m_p$ if $p \in \ur$, or if $p|d$.)

Consider the primes $p \in \ram(\pi)$. If $\kappa_p(g_p^{-1}\gamma_p g_p) \neq 0$, then clearly $g_p^{-1}\gamma_p g_p \in \Q_p^\times \O_p^\times$, or equivalently, $\gamma_p \in \Q_p^\times ({}^g\O)_p^\times$. So far, we have not at all used condition $(2)$ of Definition \ref{d:uniformlycontrolled}. We now do so. For each prime $p \in \ram(\pi)$ define $r_p =  a_1(\pi_p)+1 .$ Define $R_p = \{1, \ldots , r_p\}$ and let $R$ be the set-theoretic product  $\prod_{p \in \ram(\pi)}R_p$. For each $u = (u_p)_{p\in \ram(\pi)} \in R$, where each $u_p \in R_p$, associate another tuple $H_u = (\eta_{p,u_p})_{p\in \ram(\pi)}$ as follows: $\eta_{p, 1} = \eta_1$ and $\eta_{p,i} = \eta_1 + (i-1) \frac{\eta_2-\eta_1}{a_1(\pi_p)}$ for all $1\le i \le r_p$.

Now consider a $\gamma \in G'(\Q)$ which satisfies (a)-(c) above and such that $\gamma_p \in  \Q_p^\times ({}^g\O)_p^\times$ for each $p \in \ram(\pi)$. It is clear that for any such $\gamma$, there exists a unique tuple $u \in R$ such that

\begin{enumerate}
\item [(d)] $g_p^{-1}\gamma_p g_p \in \Q_p^\times (\L^{\eta_{p,u_p}}_p \cap \O_p^\times)$,  $g_p^{-1}\gamma_p g_p  \notin \Q_p^\times (\L^{\eta_{p,u_p+1}}_p \cap \O_p^\times) \quad \forall p \in \ram(\pi)$.
    \end{enumerate}

Above, we adopt the convention that $\L_p^{\eta_{p,r_p+1}}$ is the \emph{empty set} for each $p \in \ram(\pi)$, so that the second part of condition (d) is automatic for the primes where $u_p=r_p$.

It is clear from the above discussion that the contribution to the right-most sum in \eqref{keyineqha} only come from those $\gamma$ for which the conditions (a)-(d) above are satisfied for \emph{some} tuple $u \in R$. Furthermore, whenever the conditions (a)-(d) above are satisfied for a particular $u$, condition 2(d) of Definition \ref{d:uniformlycontrolled} implies that $$\left|\kappa_S(g_S^{-1}\gamma_S g_S) \right| \ll C'(\pi)^{O(1)} \prod_{p\in \ram(\pi)}p^{(\eta_2 - \eta_1)(u_p  - a_1(\pi_p))}.$$

For each tuple $u$, recall the definition of the lattice $\L^{H_u}$, which is precisely the global lattice corresponding to the collection of local lattices $\{\L_p^{\eta_{p,u_p}}\}_{p\in \ram(\pi)}$.  Define $${}^g\L^{H_u}(\ell;z, 1)  =\{\alpha \in {}^g\L^{H_u}: \nr(\alpha) = \ell, u(z, \iota_\infty(\alpha) z) \le 1\}.$$
By Proposition 4.2 of \cite{saha-sup-minimal}, the number of $\gamma \in G'(\Q)$ satisfying (a)-(d) above is bounded by the size of $|{}^g\L^{H_u}(\ell;z, 1)|$.

Therefore, we conclude \begin{equation}\label{keyineqha2}K_\kappa(g,g) \ll C'(\pi)^{O(1)} \sum_{u\in R} \sum_{1 \le \ell \le 16\Lambda^4} \frac{y_\ell}{\ell^{1/2}}|{}^g\L^{H_u}(\ell;z, 1)|  \prod_{p\in \ram(\pi)}p^{(\eta_2 - \eta_1)(u_p  - a_1(\pi_p))}.\end{equation}

Now, using the fact that the lattice ${}^g\L^{H_u}$ is tidy in $\O^\m$ and has index $N^{H_u}$ in $\O^\m$, we use Proposition \ref{t:counting} and \eqref{e:levelbds} to obtain for each $1 \le L \le C(\pi)^{O(1)}$:
\begin{equation}\label{eq1}\sum_{1\le m \le L } |{}^g\L^{H_u}(m;z, 1)| \ll_{\epsilon} C(\pi)^{\epsilon} \left(  L +C'(\pi)^{O(1)}\frac{L^2}{N\prod_{p \in \ram(\pi)} p^{(\eta_2 - \eta_1)(u_p  - 1)} } \right),\end{equation}
 \begin{equation}\label{eq2}\sum_{1\le m \le L } |{}^g\L^{H_u}(m^2;z, 1)| \ll_{\epsilon}C(\pi)^{\epsilon} \left(  L +C'(\pi)^{O(1)}\frac{L^3}{N\prod_{p \in \ram(\pi)} p^{(\eta_2 - \eta_1)(u_p  - 1)} } \right).\end{equation}

Combining \eqref{e:ylcases}, \eqref{keyineqha2}, \eqref{eq1}, \eqref{eq2}, we get

\begin{equation}\label{keyineqha3}\begin{split}K_\kappa(g,g) &\ll_{ \eps} C'(\pi)^{O(1)}C(\pi)^{\epsilon} \left(\Lambda + \frac{\Lambda^4}{NC_1(\pi)^{\eta_2-\eta_1}}\right)  \sum_{u\in R } 1 \\ &\ll_\eps C'(\pi)^{O(1)} C(\pi)^\eps \left(\Lambda + \frac{\Lambda^4}{NC_1(\pi)^{\eta_2-\eta_1}}\right)  \end{split}\end{equation}
 since $|R| \ll_\eps C(\pi)^\eps$.

From  \eqref{keyineq1} and \eqref{keyineqha3} we  obtain the pivotal inequality:

\begin{equation}\label{keyineq2} |\phi'(g)|^2 \ll_{T, \eps}  C_1(\pi)^{\eta_1+\delta + \eps}C'(\pi)^{O(1)}\left( \frac{1}{\Lambda} + \frac{\Lambda^2}{C_1(\pi)^{\eta_2} } \right).   \end{equation}

Now, putting $\Lambda = C_1(\pi)^{\frac{\eta_2}3}$, we immediately obtain \eqref{e:toprove}, as required.

\section{Some $p$-adic stationary phase analysis}\label{s:local}
This section will be purely local. The results of this section will complete the proof of  Proposition \ref{t:mainlocal}.
\subsection{Notations}The following notations will be used throughout Section \ref{s:local}. We let $F$ be a non-archimedean local
  field  of characteristic zero. We assume throughout that $F$ has \emph{odd} residue cardinality $q$.
Let $\mathfrak{o}$ be its ring of integers,
and $\mathfrak{p}$ its maximal ideal.
Fix a uniformizer $\varpi$ of $\OF$ (a choice of generator of $\mathfrak{p}$) .
Let $|.|$ denote the absolute value
on $F$ normalized so that
$|\varpi| = q^{-1}$. For each $x \in F^\times$, let $v(x)$ denote the integer such that $|x| = q^{-v(x)}$.  For a non-negative integer $m$, we define the subgroup $U_m$ of  $\OF^\times$ to be the set of elements $x \in \OF^\times$ such that $v(x-1) \ge m$.

Let $\psi$ be a fixed non-trivial additive character of $F$, and let $a(\psi)$ be the smallest integer such that $\psi$ is trivial on $\p^{a(\psi)}$. For $\chi$  a multiplicative character of $F$, let $a(\chi)$ be the smallest integer such that $\chi$ is trivial on $U_{a(\chi)}$. We recall the following well-known lemma (see, e.g., Lemma 2.37 of \cite{sahasupwhittaker}).
\begin{lemma}\label{Lem:alphaofchi}
Let $\chi$ be a multiplicative character over  $F$ with $a(\chi)\geq 2$. Then there exists $\alpha_\chi\in F^\times$ such that $v(\alpha_\chi)=-a(\chi)+a(\psi)$ and
\begin{equation}
\chi(1+\D x)=\psi(\alpha_\chi \D x)
\end{equation}
for any $\D x\in \p^{\lceil a(\chi)/2\rceil}$.
\end{lemma}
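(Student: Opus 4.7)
The plan is to exploit the classical principle that for $m \geq \lceil a(\chi)/2 \rceil$ the correspondence $1+x \leftrightarrow x$ identifies $U_m$ with the additive subgroup $\p^m$ ``linearly enough'' modulo the kernel of $\chi$ that the multiplicative character $\chi|_{U_m}$ is realized by the additive character $\psi$. Set $m_0 = \lceil a(\chi)/2 \rceil$; the hypothesis $a(\chi) \geq 2$ guarantees $1 \leq m_0 \leq a(\chi) - 1$, which is exactly what makes both halves of the argument below go through.

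The only substantive step is to show that $\phi : \p^{m_0} \to \C^\times$ defined by $\phi(x) = \chi(1+x)$ is an additive character. For $x, y \in \p^{m_0}$, I would write $(1+x)(1+y) = (1+(x+y))(1+z)$ where $z = \frac{xy}{1+x+y} \in \p^{2m_0} \subseteq \p^{a(\chi)}$, the last containment using $2m_0 \geq a(\chi)$. Since $\chi$ is trivial on $U_{a(\chi)}$, this yields $\chi((1+x)(1+y)) = \chi(1+x+y)$, so $\phi(x+y) = \phi(x)\phi(y)$. Pontryagin duality for $F$, together with the identification of the characters of $\p^{m_0}$ with $F / \p^{a(\psi) - m_0}$ via $\alpha \mapsto \bigl(x \mapsto \psi(\alpha x)\bigr)$, then produces an element $\alpha_\chi \in F$ (a priori a coset) with $\chi(1+x) = \psi(\alpha_\chi x)$ for all $x \in \p^{m_0}$.

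It remains to pin down $v(\alpha_\chi)$ via the definition of the conductor. The character $x \mapsto \psi(\alpha_\chi x)$ is trivial on $\p^m$ iff $\alpha_\chi \in \p^{a(\psi) - m}$. Triviality of $\chi$ on $U_{a(\chi)}$ gives $v(\alpha_\chi) \geq a(\psi) - a(\chi)$, while nontriviality of $\chi$ on $U_{a(\chi)-1}$ (applicable because $a(\chi) - 1 \geq m_0$) gives $v(\alpha_\chi) \leq a(\psi) - a(\chi)$. Hence $v(\alpha_\chi) = a(\psi) - a(\chi)$, and in particular $\alpha_\chi \in F^\times$; the residual ambiguity of $\alpha_\chi$ modulo $\p^{a(\psi) - m_0}$ does not affect the valuation since $a(\psi) - m_0 > a(\psi) - a(\chi)$. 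There is no serious obstacle to this argument; the main piece of bookkeeping is verifying that $a(\chi) \geq 2$ is precisely the threshold ensuring $m_0 \leq a(\chi) - 1$, which is why the hypothesis appears in the statement.
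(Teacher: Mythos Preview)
Your argument is correct and is the standard proof of this well-known fact. The paper itself does not give a proof; it simply cites the result (referencing Lemma~2.37 of \cite{sahasupwhittaker}), so there is nothing to compare against beyond noting that what you have written is exactly the expected linearization-plus-duality argument.
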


Throughout this section, we denote $\O = M_2(\OF)$, $G = \GL_2(F)$ and $K=\GL_2(\OF)$. Define subgroups
$N =
\{n(x):  x\in F \}$,
$A = \{a(y): y\in F^\times \}$,
$Z =\{ z(t):
t \in F^\times \}$, $B_1=NA$,
and $B = Z N A = G \cap
\left[
  \begin{smallmatrix}
    *&*\\
    &*
  \end{smallmatrix}
\right]$ of $G$.
 For each non-negative integer $r$,$s$
denote
\[
K_0(r)
= K \cap \begin{pmatrix}
  * & *  \\
  \p^r  & *
\end{pmatrix},   K^\ast(r,s) = K \cap \begin{pmatrix}
  \ast  & \p^s  \\
  \p^r  & \ast
\end{pmatrix},  \O(r) =  \O \cap \begin{pmatrix}
  \ast  & \p^r  \\
  \p^r  & \ast
\end{pmatrix},  K^\ast(r)= K^\ast(r,r) = (\O(r))^\times.
\]

We note our normalization of Haar measures.
The measure $dx$ on the additive group $F$ assigns volume 1
to $\OF$, and transports to a measure on $N$.
The measure $d^\times y$ on the multiplicative group $F^\times$ assigns
volume 1 to $\OF^\times$,
and transports to measures on
$A$ and $Z$.
We obtain a left Haar measure $d_Lb$ on $B$ via
$d_L(z(u)n(x)a(y)) = |y|^{-1}\, d^\times u \, d x \, d^\times
y.$
Let $dk$ be the probability Haar measure on $K$.
The Iwasawa decomposition
$G = B K$ gives a left Haar measure $dg = d_L b \, d k$ on $G$.

Let $\pi$ be an irreducible, infinite-dimensional, unitary representation of $G$ with trivial central character. We define $a(\pi)$ to be the smallest non-negative integer such that $\pi$ has a $K_0(a(\pi))$-fixed  vector. Let $\langle, \rangle$
denote a  $G$-invariant inner product on $V_\pi$ (which is unique up to multiples).

We will use the following notation:
\begin{itemize}
\item $n =a(\pi)$,
\item $n_1 := \lceil \frac{n}{2} \rceil$,

\item $n_0:= n -n_1 = \lfloor \frac{n}{2} \rfloor$.
\end{itemize}

We let $v_\pi$ denote a \emph{newform} in the space of $\pi$, i.e., a non-zero vector fixed by $K_0(\p^n)$; it is known that $v_\pi$ is unique up to multiples. Put $v_\pi' = \pi(a(\varpi^{n_1}))v_\pi$. Note that $v_\pi'$ is  the unique (up to multiples) non-zero vector in $\pi$ that is invariant under the subgroup $a(\varpi^{n_1})K_0(n)a(\varpi^{-n_1})$. Define matrix coefficients $\Phi_\pi$, $\Phi_\pi'$ on $G$ as follows:

$$\Phi_\pi(g) =
\frac{\langle
v_\pi , \pi(g) v_\pi\rangle}{\langle v_\pi , v_\pi\rangle},$$

$$\Phi_\pi'(g) =
\Phi_\pi(a(\varpi^{-n_1})ga(\varpi^{n_1})) = \frac{\langle
v_\pi' , \pi(g) v_\pi'\rangle}{\langle v_\pi' , v_\pi'\rangle}.$$These definitions are independent of the choice of $v_\pi$ or of the inner product.

\subsection{A reformulation of Proposition \ref{t:mainlocal}}
For the rest of Section \ref{s:local}, let $\pi$, $v_\pi$, $v_\pi'$, $\Phi_\pi'$ be as above, and assume that $a(\pi)>2$ and $\pi$ has trivial central character. This is sufficient for the purpose of proving Theorem \ref{t:mainlocal}, as noted in Remark \ref{rem:localsmallexclude}.

\begin{proposition}\label{prop:mainlocal}  For each representation $\pi$ as above, the following hold:
\begin{enumerate}
\item [(a)]  The subrepresentation of $\pi|_{K^\ast(1)}$ generated by $v_\pi'$ is irreducible of dimension $\ll q^{n_0}$.

\item  [(b)] Let $j \le n_1$. Then for all $g \in     K^\ast(1)$, $g \notin  K^\ast(j+1)$, we have $|\Phi_\pi'(g)| \ll q^{\frac{j-n_1}{2} + O(1)}.$

\end{enumerate}
\end{proposition}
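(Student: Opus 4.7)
The plan is to carry out the entire analysis in the Kirillov model of $\pi$. In this model the newform satisfies $W_{v_\pi}(a(y))=\mathbf{1}_{\OF^\times}(y)$, so $v_\pi'=\pi(a(\varpi^{n_1}))v_\pi$ corresponds to $\mathbf{1}_{\varpi^{-n_1}\OF^\times}$, and the matrix coefficient admits the Kirillov expression
\[
\Phi_\pi'(g) \;=\; \frac{1}{\|v_\pi'\|^2}\int_{\varpi^{-n_1}\OF^\times} \overline{W_{v_\pi}\bigl(a(y)\,g\,a(\varpi^{n_1})\bigr)}\,d^\times y.
\]
Every $g\in K^\ast(1)$ admits the ``LDU'' decomposition $g=n^-(c)\,t\,n(b)$ with $b,c\in\p$ and $t$ in the diagonal torus. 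Since $\pi$ has trivial central character and the Kirillov newform is diagonal-torus invariant, the $t$-piece drops out. The upper unipotent acts by pointwise multiplication by $\psi(b\,\cdot\,)$, while the lower unipotent, via the identity $a(u)n^-(c)a(\varpi^{n_1})=a(u\varpi^{n_1})n^-(c\varpi^{n_1})$, is governed by the Whittaker values $W_{v_\pi}(a(u\varpi^{n_1})n^-(c\varpi^{n_1}))$ for $u\in\OF^\times$ and $v(c)\in[1,n_0-1]$; outside this range the right $K_0(n)$-invariance forces $\pi(n^-(c))v_\pi'=v_\pi'$.

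The key input is the Corbett--Saha-type expansion (see Remark~2.20 of \cite{corbett-saha}) that expresses these Whittaker values as a family of ${}_2F_1$-type $p$-adic hypergeometric integrals,
\[
W_{v_\pi}\bigl(a(u\varpi^{n_1})n^-(c')\bigr) \;=\; (\text{explicit prefactor})\cdot \int_{\OF^\times} \chi(t)\,\psi\bigl(\Theta_\pi(t;u,c')\bigr)\,d^\times t,
\]
with an explicit phase $\Theta_\pi$ and an auxiliary character $\chi$ determined by the Langlands/Bushnell--Kutzko parameters of $\pi$. Substituting this into the formula for $\Phi_\pi'(g)$ turns the matrix coefficient into a double oscillatory integral on $\OF^\times\times\OF^\times$ whose combined phase has valuation at most $-(n_1-v(b))$ from the $\psi(b\,\cdot\,)$-factor and $-(n_1-v(c))$ from the hypergeometric factor. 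For part~(b), the condition $g\in K^\ast(1)\setminus K^\ast(j+1)$ forces $\min(v(b),v(c))\le j$, so a standard $p$-adic stationary phase / Gauss-sum estimate delivers square-root cancellation of order $q^{-(n_1-j)/2}$, yielding $|\Phi_\pi'(g)|\ll q^{(j-n_1)/2+O(1)}$. The two cases $v(b)\le v(c)$ and $v(c)<v(b)$ are handled symmetrically, with an Iwahori Bruhat swap $w$ exchanging the roles of the two unipotents in the second case.

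For part~(a), the same Kirillov description realizes the $K^\ast(1)$-orbit of $v_\pi'$ as an explicit family of functions indexed by cosets $(b,c)\in(\p/\p^{n_1})\times(\p/\p^{n_0})$. Crucially, in the Kirillov embedding these orbit vectors satisfy genuine linear relations coming from the vanishing of additive-character sums over the residue field $\OF/\p$ (for instance, the ``trivial-character summand'' of the permutation-type part of the $N(\p)$-orbit of $v_\pi'$ maps to zero in the Kirillov realization, by orthogonality of additive characters on $\OF/\p$), cutting the effective dimension down to $\ll q^{n_0}$. The resulting subspace identifies with the sum of non-trivial isotypic components under an affine-type subgroup of $K^\ast(1)$, whose irreducibility under the full $K^\ast(1)$ follows from a standard Clifford/Mackey argument. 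The main technical obstacle is executing the $p$-adic stationary phase analysis uniformly in $\pi$: the hypergeometric integrand is only locally constant, so critical points and Hessian-type lower bounds must be controlled by valuation arguments, and higher-order or positive-dimensional critical loci contribute at most $O(1)$ losses that are absorbed into the implicit constants.
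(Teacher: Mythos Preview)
Your outline for part~(b) is in the same spirit as the paper's (Kirillov/Whittaker model plus $p$-adic stationary phase), but the claimed symmetry between the cases $v(b)\le v(c)$ and $v(c)<v(b)$ via an ``Iwahori Bruhat swap $w$'' is not valid and this is where the argument breaks. The element $w_0=\left(\begin{smallmatrix}0&1\\1&0\end{smallmatrix}\right)$ does normalise $K^\ast(1)$ and exchange $b\leftrightarrow c$, but it does \emph{not} fix the line $\C v_\pi'$: one computes $\pi(w_0)v_\pi'=\epsilon\,\pi(a(\varpi^{-n_0}))v_\pi$, since the Atkin--Lehner eigenvector is $v_\pi$, not its translate $v_\pi'$. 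The paper accordingly treats the two cases asymmetrically. It first reduces part~(b) to a statement (Theorem~\ref{p:localnewmatrixmain}) about $\Phi_\pi$ on the Borel--Iwahori cosets. In Case~I ($v(c)=j$) one lands in $B(F)\left(\begin{smallmatrix}1&0\\\varpi^{j+n_1}&1\end{smallmatrix}\right)K_0(n)$ and applies the stationary-phase bound~\eqref{e:1}. In Case~II ($v(c)>j$, so $v(b)=j$) no stationary phase is used at all: instead the \emph{support constraints} \eqref{e:2}--\eqref{e:3} on $\Phi_\pi$ force $v(m)$ into a range incompatible with $v(b')=j-n_1$, so $\Phi_\pi'(g)=0$ outright. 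Your sketch provides no mechanism for this vanishing.

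For part~(a) your proposal is not a proof, and the paper's argument is entirely different and much shorter. Irreducibility is immediate from \emph{newform uniqueness}: if the cyclic $K^\ast(1)$-module generated by $v_\pi'$ split as $V_2\oplus V_3$, projecting $v_\pi'$ onto the summands would yield two linearly independent vectors fixed by $a(\varpi^{n_1})K_0(n)a(\varpi^{-n_1})\subset K^\ast(1)$, contradicting the one-dimensionality of that fixed space. The dimension bound comes from the inclusion $K^\ast(1)\subset K^\ast(0,n_1-n_0)$ together with the explicit computation in \cite[Prop.~2.13, Lemma~2.18]{saha-sup-level-hybrid}. Your ``Clifford/Mackey argument'' does not establish irreducibility of a cyclic module, and your coset parametrisation $(\p/\p^{n_1})\times(\p/\p^{n_0})$ has size $q^{n-2}$; the single layer of additive-character orthogonality you invoke saves at most a factor of $q$, not the factor $q^{n_1-2}$ needed to reach $\ll q^{n_0}$.
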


Before starting on the proof of Proposition \ref{prop:mainlocal}, we explain how it implies Proposition \ref{t:mainlocal}.
\begin{proof}[Proof that Proposition \ref{prop:mainlocal} implies Proposition \ref{t:mainlocal}] Let $\eta_1 = 0$, $\eta_2 =1/2$, $\delta=1$.  Let $p$ be an odd prime not dividing $d$, and consider Proposition \ref{prop:mainlocal} with $F=\Q_p$. We need to show that the conditions $(1), (2)$ of Definition \ref{d:uniformlycontrolled} hold. In the context of Definition \ref{d:uniformlycontrolled} $\pi_{i,p} = \pi$, $v_{i,p}=v_\pi$ where $\pi$, $v_\pi$ are as defined in the beginning of this section. We define $g_{i,p} = \iota_p^{-1}\mat{\varpi^{a_1(\pi_i)}}{}{}{1}$, and $\O_{i,p} = \iota_p^{-1}(\O(1))$. The vector $v_{i,p}'$ from Definition  \ref{d:uniformlycontrolled} is then the vector $v_\pi'$ defined above. Now the condition (1) of Definition \ref{d:uniformlycontrolled} follows immediately from part (a) of Proposition \ref{prop:mainlocal}.

In order  to verify condition (2), let $0 \le \eta \le \frac12$. Define $j = \lfloor  n_1 \eta/2 \rfloor$ and put $\L^\eta_{i,p} = \iota_p^{-1}(\O(j+1))$.  Now condition (2) of Definition \ref{d:uniformlycontrolled} is an immediate consequence of part (b) of Proposition \ref{prop:mainlocal}.
\end{proof}

\begin{remark} For the purpose of verifying condition (2) in the proof above, we could have selected $j$ to be any non-decreasing integer valued function of $\eta \in [0, \frac12]$ satisfying $\frac{n_1 \eta}{2} - O(1) \le j \le 2n_1 \eta + O(1)$.
\end{remark}
\subsection{Proof of part (a) of Proposition \ref{prop:mainlocal}} Let us prove part (a) of Proposition \ref{prop:mainlocal}. Let  $V_1$ be the vector-space generated by the action of  $K^\ast(1)$ on $v_\pi'$. First we show that the action of $K^\ast(1)$ on $V_1$ is irreducible. If not, then there exists a direct sum decomposition $V_1 =  V_2 + V_3$ into non-zero subspaces
$V_2$ and $V_3$ which each admit an action of  $K^\ast(1)$. Since $v_\pi'$ generates $V_1$, its projections along $V_2$ and $V_3$ give two linearly independent vectors which are both fixed by the subgroup $a(\varpi^{n_1})K_0(n)a(\varpi^{-n_1}) \subseteq K^\ast(1)$ (recall that $a(\pi) \ge 2$). This contradicts newform theory, thus showing the irreducibility of $V_1$.

Next, we need to show that $\mathrm{dim}(V_1) \ll q^{n_1}$. Let  $V_2$ be the vector-space generated by the action of  $K^\ast(0,n_1-n_0)$ on $v_\pi'$. Since $K^\ast(1)$ is a subgroup of $K^\ast(0,n_1-n_0)$ it follows that $\mathrm{dim}(V_1) \le \mathrm{dim}(V_2)$. On the other hand Proposition 2.13 and Lemma 2.18 of \cite{saha-sup-level-hybrid} show that $\mathrm{dim}(V_2) \ll q^{n_0}.$ This completes the proof.

\subsection{A refinement of part (b)} In this subsection, we state a refinement of assertion (b) of Proposition \ref{prop:mainlocal} in terms of a Theorem that involves the matrix coefficient associated to the newvector.

\begin{theorem}\label{p:localnewmatrixmain}Let $y$, $z$ in $F^\times$ and $m \in F$. \begin{enumerate} \item Suppose that $n_0 <i <n-1$. Then we have \begin{equation}\label{e:1}\left|\Phi_\pi\left( \mat{y}{m}{0}{z}\mat{1}{0}{\varpi^i}{1}\right)\right| \ll q^{\frac{i-n}{2} + O(1)},\end{equation} and furthermore, for such $i$ as above, we have \begin{equation}\label{e:2}\Phi_\pi\left( \mat{y}{m}{0}{z}\mat{1}{0}{\varpi^i}{1}\right) \neq 0 \Rightarrow v(y)=v(z)=v(m)+n-i.\end{equation}

\item Suppose that $n-1 \le i \le n$. Then we have \begin{equation}\label{e:3}\Phi_\pi\left( \mat{y}{m}{0}{z}\mat{1}{0}{\varpi^i}{1}\right) \neq 0 \Rightarrow v(y)=v(z)\le v(m)+1.\end{equation}

    \end{enumerate}
\end{theorem}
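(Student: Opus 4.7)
The plan is to pass to the Kirillov model of $\pi$ and analyze the resulting oscillatory integrals by $p$-adic stationary phase. Since $\pi$ is unitary with trivial central character, one has the standard Kirillov-model identity
$$\Phi_\pi(g)=\frac{1}{C}\int_{F^\times}W_\pi(a(y)g)\,\overline{W_\pi(a(y))}\,d^\times y,$$
where $W_\pi$ is the Whittaker function attached to $v_\pi$ with respect to $\psi$ and $C=\int_{F^\times}|W_\pi(a(y))|^2\,d^\times y$. The support and size of $y\mapsto W_\pi(a(y))$ in the Kirillov model are standard and are controlled by $a(\pi)=n$.

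For $g=\mat{y}{m}{0}{z}\mat{1}{0}{\varpi^i}{1}$, I would first peel off the upper-triangular factor using $W_\pi(n(x)h)=\psi(x)W_\pi(h)$ and the trivial central character, reducing $W_\pi(a(u)g)$ to $\psi(um/y)\,W_\pi(a(uy/z)\,\mat{1}{0}{\varpi^i}{1})$ for the integration variable $u$. The Bruhat decomposition (modulo the centre)
$$\mat{1}{0}{\varpi^i}{1}\equiv n(\varpi^{-i})\,a(-\varpi^{-2i})\,w\,n(\varpi^{-i})\pmod{Z},$$
together with another application of Whittaker covariance, further rewrites $W_\pi(a(u')\mat{1}{0}{\varpi^i}{1})$ as $\psi(u'\varpi^{-i})\,W_\pi(a(-u'\varpi^{-2i})\,w\,n(\varpi^{-i}))$. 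The crucial ingredient is now the ${}_2F_1$-hypergeometric integral representation of the Whittaker newvector referenced in Remark~2.20 of \cite{corbett-saha}, which expresses $W_\pi(a(u')\,w\,n(v))$ as a compact one-variable integral over $\OF$ whose phase is built from the inducing character of $\pi$ via a Lemma~\ref{Lem:alphaofchi}-type identification. Substituting this back turns $\Phi_\pi(g)$ into a double oscillatory integral whose outer $u$-phase is linear with frequency combining $m/y$, $\varpi^{-i}$, and a character parameter $\alpha$ of $\pi$. Matching this frequency against the support of $W_\pi$ in the Kirillov model then extracts the valuation constraints in parts~(1) and~(2) of the theorem directly: the non-vanishing conditions are exactly the statements that these supports/frequencies are compatible.

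For the decay bound of part~(1), I would apply $p$-adic stationary phase to the inner compact integral. In the range $n_0<i<n-1$ the phase $P(u)$ inherited from the ${}_2F_1$ formula has a simple non-degenerate critical point; concentration of the integrand in a $p$-adic neighborhood of radius $q^{-(n-i)/2}$ produces a saving of size $q^{(i-n)/2}$, and together with the normalisation $C^{-1}$ and the outer integration this yields the claimed bound $|\Phi_\pi(g)|\ll q^{(i-n)/2+O(1)}$. The main obstacle will be rigorously locating the critical point of $P(u)$ and verifying that its $p$-adic Hessian is non-degenerate with the expected valuation throughout $n_0<i<n-1$, done uniformly across the different types of $\pi$ (principal series, special, and supercuspidal of both dihedral and non-dihedral types), since the explicit shape of the ${}_2F_1$ formula changes with the type. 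A subsidiary difficulty is keeping the implicit $O(1)$ constants uniform near the boundary $i=n-2$, where the critical point may drift to the edge of the domain of integration and require separate boundary analysis.
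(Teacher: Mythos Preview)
Your proposal is essentially the paper's approach: pass to the Whittaker/Kirillov model, write the matrix coefficient as an oscillatory integral, feed in an integral formula for $W_\pi\bigl(a(\cdot)\mat{1}{0}{\varpi^i}{1}\bigr)$, and apply $p$-adic stationary phase (i.e., break the domain into cosets of size governed by Lemma~\ref{Lem:alphaofchi}, eliminate pieces by character orthogonality, and count the survivors). Two remarks will simplify your execution. First, the support constraints \eqref{e:2} and \eqref{e:3} are already established in \cite[Proposition~3.1]{Hu:17a}; the paper simply cites them, and then only the bound \eqref{e:1} needs work. Second, your list of cases is too long: under the standing hypotheses $q$ odd and $n=a(\pi)>2$, special representations are excluded (they have $a(\pi)\le 2$), and every supercuspidal of $\GL_2$ over an odd-residue field is dihedral (compactly induced from a character of a quadratic extension). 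So only principal series and dihedral supercuspidals occur, and in each case an explicit one-variable integral formula for $W^{(i)}$ is already in the literature (\cite[Lemma~2.12]{hu_triple_2017} and \cite[Lemma~3.1]{assing18} respectively), making the Bruhat step and a general ${}_2F_1$ derivation unnecessary.

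One small correction to your stationary-phase picture: after linearising the phase via Lemma~\ref{Lem:alphaofchi}, the surviving critical equations are \emph{quadratic} in the integration variables (in both the principal series and supercuspidal cases), so you should expect up to two critical points rather than one; the nondegeneracy you need is precisely the hypothesis $p\neq 2$. The boundary worry you raise near $i=n-2$ does not materialise: the counting of surviving cosets is uniform in the whole range $n_0<i<n-1$, with the parity mismatches between $\lceil\cdot\rceil$ and $\lfloor\cdot\rfloor$ absorbed into the $O(1)$ exponent.
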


Before starting on the proof of Theorem \ref{p:localnewmatrixmain}, we explain how it implies Proposition \ref{prop:mainlocal}.
\begin{proof}[Proof that Theorem \ref{p:localnewmatrixmain} implies Proposition \ref{prop:mainlocal}] Let $j$, $g$ be as in Proposition \ref{prop:mainlocal}. Since we have the trivial upper bound of 1 on $| \Phi_\pi'(h)|$ for all $h$, and since $g \in K^\ast(1)$ we may assume that $1\le j< n_0 -1$. Furthermore, by decreasing $j$ if necessary, we may assume that $g \in K^\ast(j)$. So putting $g = \mat{a}{b}{c}{d}$ we have $\min(v(b),v(c)) = j$.  Note that $\Phi_\pi'(g) = \Phi_\pi\left(\mat{a}{b'}{c'}{d}\right)$ where $c' = c\varpi^{n_1}$, $b' = b \varpi^{-n_1}$. We consider two cases. \medskip

\textbf{Case I:}  $v(c) = j$.

In this case we have $v(c') = n_1 + j$. Since $v(d)=0$, a direct calculation shows that $$\mat{a}{b'}{c'}{d} \in B(F)\mat{1}{0}{\varpi^{j+n_1}}{1}K_0(\p^n).$$ Therefore, \eqref{e:1} tells us that $\left|\Phi_\pi\left(\mat{a}{b'}{c'}{d}\right)\right|  \ll q^{\frac{j+n_1 - n}{2} + O(1)} \ll q^{\frac{j- n_1}{2}+ O(1)}$, as required.

\medskip

\textbf{Case II:}  $v(c) > j$.

In this case we have $v(b)=j$. As before we have $v(b') = j-n_1$, $v(c') = v(c)+n_1$, and $\Phi_\pi'(g) = \Phi_\pi\left(\mat{a}{b'}{c'}{d}\right)$. We can see from a direct calculation that $$\mat{a}{b'}{c'}{d} \in \mat{y}{m}{0}{z}\mat{1}{0}{\varpi^{r}}{1}K_0(\p^n)$$ for some $m \in F$, $y \in \OF^\times$, $z \in \OF^\times$ and $r = \min(n, v(c)+n_1)$. Note that $v(b')\ge v(m)$.

 We claim that $\Phi_\pi\left(\mat{a}{b'}{c'}{d}\right) =0$. Suppose not.  Suppose first that $v(c) < n_0-1$. Then $r=v(c)+n_1$, and  using \eqref{e:2} we see that $v(m)= v(c)-n_0$. This gives us $j-n_1=v(b')\ge v(m) =v(c)-n_0$, and hence that $v(c)\le j$, a contradiction. Next, suppose that $v(c)\ge n_0-1$. Then $n \ge r\ge n-1$ and using \eqref{e:3} we see that $j-n_1=v(b')\ge v(m) \ge  -1$.  So $j \ge n_1 -1$, which contradicts our earlier assumption that $j<n_0-1$.

\subsection{The proof of  Theorem \ref{p:localnewmatrixmain} }
The assertions \eqref{e:2} and \eqref{e:3} of Theorem \ref{p:localnewmatrixmain} have already been proven in \cite[Proposition 3.1]{Hu:17a}. So we only need to prove the upper bound part in Theorem \ref{p:localnewmatrixmain}, i.e., \eqref{e:1}.

 For simplicity denote
\begin{equation}
\Phii=\Phi_\pi\left(\zxz{a}{m}{0}{1}\zxz{1}{0}{\varpi^i}{1}\right).
\end{equation}
For the rest of this section, we fix an additive character $\psi$ of $F$ such that $a(\psi)=0$ and consider the Whittaker model of $\pi$ with respect to this character.  Using the usual inner product in the Whittaker model, it follows that,
\begin{equation}\label{Eq:WhitToMC}
\Phii=\int\limits_{v( x )=0}\psi(m x )W^{(i)}(a x )d^\times x ,
\end{equation}
where $W^{(i)}( x )=W_{\pi}\left(\zxz{ x }{0}{0}{1}\zxz{1}{0}{\varpi^i}{1}\right)$ and $W_\pi$ is the local Whittaker newform (see, e.g., Section 3 of \cite{Hu:17a} for more details).

The basic tool to analyze such integrals is the $p$-adic stationary phase analysis. Roughly speaking, we will rewrite this integral and break it up into pieces, and we will prove (using orthogonality of characters) that most of these pieces vanish. The required bounds will follow by counting the number of non-vanishing pieces.
 Since $n >2$ and $q$ is odd, there are two possibilities for $\pi$: principal series representations, and dihedral supercuspidal representations. We deal with each below. 

\subsubsection{Principal series representation}
Let $\pi=\pi(\mu_1,\mu_2)$ be a principal series representation. In this case $n$ is even and we take $\mu_2 = \mu_1^{-1} = \mu$, $a(\mu)=n_1=n_0=n/2$. Denote
\begin{equation}
C_0=\int\limits_{u\in \OF^\times} \mu( u)\psi(-\varpi^{-n_0} u)du.
\end{equation}
Using the usual intepretation as a Gauss sum (see, e.g., \cite[(6)]{sahasupwhittaker}) we see that  $|C_0|\asymp \frac{1}{q^{n_0/2}}.$

By \cite[Lemma 2.12]{hu_triple_2017},  we have
\begin{lemma}
When $n_0<i\leq n$, $W^{(i)}(x)$ is supported on $x \in \OF^\times$, and for $x \in \OF^\times$ we have
\begin{equation}
W^{(i)}( x )=C_0^{-1}\int\limits_{u\in \OF^\times} \mu(1+u\varpi^{i-n_0})\mu( x  u)\psi(-\varpi^{-n_0} x  u)du.
\end{equation}

\end{lemma}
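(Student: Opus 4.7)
The plan is to compute $W^{(i)}(x)$ for the principal series $\pi=\pi(\mu,\mu^{-1})$ (with $a(\mu)=n_0=n/2$) by using Jacquet's integral formula for the Whittaker newform in the induced model, and then to reduce to the displayed integral through a careful analysis of the Iwasawa/Bruhat decomposition.

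First I would work in the normalized induced model $\Ind_B^G(\mu,\mu^{-1})$ and identify the newvector $f_0$. Since both $\mu$ and $\mu^{-1}$ have conductor exponent $n_0$, the conductor of $\pi$ is $n=2n_0$, and the unique (up to scalar) right $K_0(\p^n)$-invariant vector $f_0$ is supported on the big Bruhat cell $BwK_0(\p^n)$, where $w=\mat{0}{1}{-1}{0}$. Its value on $\mat{a}{\ast}{0}{d}wk$ is determined by the $B$-equivariance to be $\mu(a)\mu^{-1}(d)|a/d|^{1/2}$, normalized so that $f_0(w)=1$.

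Next I would use Jacquet's formula $W_\pi(g)=\int_F f_0(wn(t)g)\psi(-t)\,dt$ and evaluate at $g=a(x)n_{-}(\varpi^i)$, where $n_{-}(y)=\mat{1}{0}{y}{1}$. A direct matrix computation gives
\[
wn(t)a(x)n_{-}(\varpi^i)=\mat{\varpi^i}{1}{-x-t\varpi^i}{-t},
\]
and, when the lower-left entry $c:=-(x+t\varpi^i)$ is nonzero with $t/c\in\OF$, the standard Bruhat factorization writes this as $n(\varpi^i/c)\cdot\mathrm{diag}(-x/c,c)\cdot w\cdot n(-t/c)$. The right $K_0(\p^n)$-invariance of $f_0$ together with its $B$-equivariance then gives
\[
f_0\bigl(wn(t)a(x)n_{-}(\varpi^i)\bigr)=\mu(-x/c)\mu^{-1}(c)|x/c^2|^{1/2}
\]
on this range and zero otherwise.

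The assumption $n_0<i\le n$ is precisely what makes the support analysis clean. The condition $t/c\in\OF$, combined with the ramification of $\mu$ (via Lemma \ref{Lem:alphaofchi}, which forces oscillatory integrals whose phase varies faster than $\varpi^{-n_0}$ to vanish by orthogonality), restricts the contributing $t$ to $v(t)=-n_0$ up to a unit, and further forces $v(x)=0$ by matching the conductor of $\mu$ against the support of $\psi(-t)$. After the substitution $t=\varpi^{-n_0}xu$ with $u\in\OF^\times$ we have $c=-x(1+\varpi^{i-n_0}u)$, so the factors $\mu(-x/c)\mu^{-1}(c)$ collapse (using the triviality of the central character, $\mu\cdot\mu^{-1}=\mathbf{1}$) into the expression $\mu(xu)\,\mu(1+u\varpi^{i-n_0})$ appearing in the lemma; the phase $\psi(-t)$ becomes $\psi(-\varpi^{-n_0}xu)$; and the Jacobian of the substitution is absorbed into the overall constant $C_0^{-1}$, which is pinned down as the Gauss-sum normalization of the newform by comparing with the base case (e.g.\ $i=n$, $x=1$).

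The \emph{main obstacle} is the case analysis of the Iwasawa/Bruhat decomposition: one must verify that outside the single range $v(t)=-n_0$, $v(x)=0$ the contribution to the integral vanishes, either because the argument leaves the support $BwK_0(\p^n)$ of $f_0$ (this handles the regions where $t/c\notin\OF$ or where the decomposition falls into the ``small cell'' $B$) or because the resulting oscillatory integral has a non-stationary phase and cancels by orthogonality of characters against $\mu$. This bookkeeping, together with the comparison of normalizations, is exactly what is carried out in detail in \cite[Lemma~2.12]{hu_triple_2017}, from which the present lemma is quoted.
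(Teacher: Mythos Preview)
The paper does not prove this lemma in-text: it simply quotes it as \cite[Lemma~2.12]{hu_triple_2017}, and you correctly identify this at the end of your proposal. Your sketch via the Jacquet integral in the induced model, followed by Bruhat decomposition and a change of variable $t=\varpi^{-n_0}xu$, is indeed the standard route and is what the cited reference does.

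One caution on the details: with your stated conventions (newvector supported on $BwK_0(\p^n)$ and value $\mu(a)\mu^{-1}(d)|a/d|^{1/2}$), the substitution $c=-x(1+\varpi^{i-n_0}u)$ yields $\mu(-x/c)\mu^{-1}(c)=\mu^{-2}(1+\varpi^{i-n_0}u)\,\mu^{-1}(-x)$, not $\mu(1+u\varpi^{i-n_0})\mu(xu)$ directly. To land on the displayed formula one needs the correct choice between $\pi(\mu,\mu^{-1})$ and $\pi(\mu^{-1},\mu)$ for the model in which the newvector has the stated support, together with a harmless change of variable $u\mapsto u^{-1}$ (or $u\mapsto -u$) in $\OF^\times$; the paper's convention is $\mu_2=\mu_1^{-1}=\mu$, so $\mu_1=\mu^{-1}$ sits in the upper-left. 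This is purely a bookkeeping issue and does not affect the strategy, but as written your ``collapse'' step is not literally correct.
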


Note that the condition $a(\pi)\geq 3$ implies that $a(\mu)\geq 2$.
Let $\AL$ be the constant associated to $\mu$ by Lemma \ref{Lem:alphaofchi}. Then $v(\AL)=-n_0$.

By the results of \cite{Hu:17a}, $\Phii$ is supported on $v(a)=0$ and $v(m)=i-n>-n_0$. Then by \eqref{Eq:WhitToMC},
\begin{align}\label{Eq:PhiforPrincipalseries}
\Phii&=C_0^{-1}\int\limits_{v(x)=0}\psi(mx)\int\limits_{u\in \OF^\times} \mu(1+u\varpi^{i-n_0})\mu(a x  u)\psi(-\varpi^{-n_0}a x  u)dud^\times  x \\
&=C_0^{-1}\iint\limits_{v(x)=v(u)=0} \psi(mxu)\mu(1+u^{-1}\varpi^{i-n_0})\mu(a x  )\psi(-\varpi^{-n_0}a x  )dud^\times  x.\notag
\end{align}

The idea is to break the above integral into small intervals, on each of which we can apply Lemma \ref{Lem:alphaofchi} to analyse the integral and get easy vanishing for most of the small intervals. This is the exact analogue of the archimedean  stationary phase analysis.
In the integrand in \eqref{Eq:PhiforPrincipalseries},
write $u=u_0(1+\D u)$  for $u_0\in \OF^\times/ (1+\p^{\lceil (n-i)/2\rceil})$, $\D u\in \p^{\lceil (n-i)/2\rceil} $,  and $x=x_0(1+\D x)$ for $x_0\in \OF^\times/(1+\p^{\lceil n_0/2\rceil})$, $\D x\in \p^{\lceil n_0/2\rceil}$. Using Lemma \ref{Lem:alphaofchi}  and the invariance properties of $\psi$ and $\mu$, we get
\begin{align}\label{e:matrixlocalkeyprinc}
&\Phii=C_0^{-1}\sum\limits_{x_0,u_0} \psi(mx_0u_0)\mu(1+u_0^{-1}\varpi^{i-n_0})\mu(a x_0  )\psi(-\varpi^{-n_0}a x_0 ) \\
& \quad \times \int\limits_{\p^{\lceil \frac{n-i}2\rceil} }  \int\limits_{\p^{\lceil \frac{n_0}2\rceil} } \psi\left(mu_0x_0\D x+mx_0u_0\D u -\AL\frac{\varpi^{i-n_0}u_0^{-1}}{1+\varpi^{i-n_0}u_0^{-1}} \D u+\AL \D x-\varpi^{-n_0}ax_0\D x\right)  d\D x d\D u     .\notag
\end{align}
For the innermost integral involving $\D x$, $\D u$ to be nonzero, we must have that
\begin{equation}\label{equation:principalseriesdx}
 mu_0x_0+\AL-\varpi^{-n_0}ax_0 \equiv 0 \mod \varpi^{-\lceil \frac{n_0}{2}\rceil},
\end{equation}
\begin{equation}\label{equation:principalseriesdu}
mx_0u_0 -\AL\frac{\varpi^{i-n_0}u_0^{-1}}{1+\varpi^{i-n_0}u_0^{-1}}\equiv 0\mod \varpi^{-\lceil\frac{n-i}{2}\rceil}.
\end{equation}
From the first equation, we get that
\begin{equation}\label{equation:principalseriesx0}
x_0\equiv -\frac{\AL}{mu_0-\varpi^{-n_0}a}\mod{\varpi^{\lfloor \frac{n_0}{2}\rfloor}}.
\end{equation}
So there is a unique $x_0$ $\mod \varpi^{\lfloor \frac{n_0}{2}\rfloor}$ for each $u_0\mod\varpi^{\lceil \frac{n-i}{2}\rceil}$ satisfying the above. As a trivial consequence, there are at most $q$ solutions of $x_0\mod\varpi^{\lceil\frac{n_0}{2}\rceil}$ for each $u_0\mod\varpi^{\lceil \frac{n-i}{2}\rceil}$.

Next by computing $ \eqref{equation:principalseriesdx}\times mu_0-\eqref{equation:principalseriesdu} \times  (mu_0-\varpi^{-n_0}a)$, we get the following necessary condition for non-vanishing:
\begin{equation}\label{equation:principalseriesu0}
\alpha\left(mu_0+\frac{\varpi^{i-n_0}u_0^{-1}(mu_0-\varpi^{-n_0}a)}{1+\varpi^{i-n_0}u_0^{-1}}\right)\equiv 0\mod \varpi^{-\lceil \frac{n-i}{2}\rceil -n_0}.
\end{equation}
Here we have used that $-\lceil \frac{n_0}{2}\rceil+i-n\geq -\lceil\frac{n-i}{2}\rceil-n_0$.
This congruence is equivalent to
\begin{equation}
 mu_0^2+2\varpi^{i-n_0}m u_0-\varpi^{i-n}a \equiv 0 \mod \varpi^{-\lceil\frac{n-i}{2}\rceil},
\end{equation}
as $v(\AL)=-n_0$. 
Note that $v(mu_0^2)=v(\varpi^{i-n}a)=i-n<v(2\varpi^{i-n_0}m u_0)$. So this quadratic equation is not degenerate when $p\neq 2$, and we can solve for at most two solutions of $u_0\mod\varpi^{\lfloor \frac{n-i}{2}\rfloor}$, and consequently at most $2q$ solutions of $u_0\mod\varpi^{\lceil \frac{n-i}{2}\rceil}$.

In summary we have that there are $\le 2q^2$ pairs $(x_0, u_0)$ contributing to \eqref{e:matrixlocalkeyprinc} and so we get
\begin{equation}
|\Phii|\ll |C_0^{-1}| 2q^2\Vol(\D x)\Vol(\D u)\asymp q^{\frac{i-n}{2}+ O(1)}.
\end{equation} as required.
\begin{remark}\label{rem:refined}
By going through the proof above more carefully (and looking at the cases $n_0$ odd and $n_0$ even) the implied constant in $O(1)$ in \eqref{e:1} can be worked out more explicitly. In particular when there are $O(q)$ solutions of $x_0$ and/or $u_0$, the sums in $x_0,u_0$ can be reduced to sums over the residue field and we expect complete square-root cancellation. The same comment applies to the supercuspidal representation case below.
\end{remark}

\subsubsection{Supercuspidal representations}
When $2\nmid q$, $\pi$ is associated by compact induction theory to a character $\theta$ over a quadratic field extension $\E/{F}$ with ramification index $e_\E$. Their relations are given explicitly as follows (see \cite{BH06})
\begin{enumerate}
\item $a(\pi)=n=2n_0$ corresponds to $e_\E=1$ and $a(\theta)=n_0$.
\item$n=2n_0+1$ corresponds to $e_\E=2$ and $a(\theta)=2n_0$.
\end{enumerate}
In the following we shall give uniform formulations and estimates  for both of these cases, which one can verify case by case according to this classification.
For simplicity, let $\E={F}(\sqrt{D})$ with $v_{F}(D)=e_\E-1$. We let $\OF_E$ denote the ring of integers of $E$, $\varpi_E$ denote a uniformizer of $E$ and $\p_E = \varpi_E\OF_E$.
Let $\psi_\E=\psi\circ\Tr_{\E/{F}}$. It's easy to check that $a(\psi_\E)=-e_\E+1$ since $a(\psi)=0$.
Let
\begin{equation}
C_0=\int\limits_{v_\E(u)=-a(\theta)-e_\E+1}\theta^{-1}(u)\psi_\E(u)d^\times u.
\end{equation}
Again by the usual interpretation as a Gauss sum, we get $|C_0|\asymp \frac{1}{q^{a(\pi)/2}}$.
Checking case by case, one can also see that for $u$ in the domain of the integral,
\begin{equation}
 v(N_{E/F}(u))=-n.
\end{equation}
The following lemma is a reformulation of \cite[Lemma 3.1]{assing18}.
\begin{lemma}
When $i>n_0$, $W^{(i)}(x)$ is supported on $v(x)=0$, and on the support,
\begin{equation}
W^{(i)}(x)=C_0^{-1}\int\limits_{v_\E(u)=-a(\theta)-e_\E+1}\theta^{-1}(u) \psi(-\frac{1}{x}\varpi^{i}N_{\E/{F}}(u))\psi_\E(u)d^\times u.
\end{equation}

\end{lemma}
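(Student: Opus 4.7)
The plan is to reduce the claim to the explicit formula for Whittaker values of dihedral supercuspidal newforms given in \cite[Lemma 3.1]{assing18}, by exploiting the Atkin--Lehner eigenvalue property of $v_\pi$.

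First I would rewrite the matrix in the argument of $W^{(i)}$ so as to make the Atkin--Lehner involution $w_n = \begin{pmatrix} 0 & 1 \\ -\varpi^n & 0 \end{pmatrix}$ appear. Since $v_\pi$ is a newform it is an Atkin--Lehner eigenvector: there exists a unimodular constant $c_1=c_1(\pi)$ such that $\pi(w_n) v_\pi = c_1 v_\pi$. Therefore, for any $g \in G$,
\begin{equation*}
W_{v_\pi}(g) = c_1 W_{v_\pi}(g w_n).
\end{equation*}
Applied to $g = \mat{x}{0}{0}{1}\mat{1}{0}{\varpi^i}{1}$, a direct computation rewrites $W^{(i)}(x)$ as $c_1 W_{v_\pi}(g_{-n, n-i, -x^{-1}})$, where $g_{t,l,v} = \mat{0}{\varpi^t}{-1}{-v\varpi^{-l}}$ is the family of matrices for which Assing's formula is applicable (after a suitable normalization of the additive character and of the newform).

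Next I would invoke \cite[Lemma 3.1]{assing18}: it expresses $W_{v_\pi}(g_{t,l,v})$, for $v \in \OF^\times$ and admissible $t, l$, as a constant times an integral over $y \in \OF_\E^\times$ of $\theta^{-1}(y)\psi_\E(\varpi_\E^{-n e_\E/2} y)\psi(-v \varpi^{i-n} N_{\E/F}(y))$. The support condition $v(x)=0$ emerges from the non-vanishing range of $t,l,v$ in that lemma, and the condition $i > n_0$ ensures we land in the regime where this formula applies with the simplification that the $\theta^{-1}$-toric integral is not identically zero. At this stage the change of variable $u = \varpi_\E^{-n e_\E/2} y$ shifts the domain of integration from $\OF_\E^\times$ (where $v_\E(y)=0$) to $\{v_\E(u) = -a(\theta) - e_\E + 1\}$, by the explicit relation between $a(\theta)$, $n$, and $e_\E$ recalled just above (verified case-by-case for $e_\E=1$ and $e_\E=2$).

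Finally I would fix the overall constant by the newform normalization $W^{(n)}(1)=1$. Evaluating the derived formula at $x=1$, $i=n$, the factor $\psi(-\varpi^{i} N_{\E/F}(u)/x) = \psi(-\varpi^n N_{\E/F}(u))$ is identically $1$ on the support (since $v(N_{\E/F}(u))=-n$, and $\psi$ has conductor $\OF$), so the integral reduces to the Gauss sum $C_0$, forcing the multiplicative constant to be $C_0^{-1}$. The main obstacle is the careful bookkeeping of normalizations: tracking the Atkin--Lehner constant $c_1$, the unspecified constant in Assing's formula, and the effect of the change of variable $u = \varpi_\E^{-n e_\E/2} y$ on the $\psi_\E$-factor, all of which collapse into the single factor $C_0^{-1}$ only because $|c_1|=1$ and the newform normalization pins down the remaining scalar uniquely.
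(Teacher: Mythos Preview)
Your proposal is correct and follows essentially the same approach as the paper's (commented-out) proof: use the Atkin--Lehner eigenvalue property to rewrite $W^{(i)}(x)$ as $c_1 W_{v_\pi}(g_{-n,n-i,-x^{-1}})$, invoke \cite[Lemma 3.1]{assing18}, perform the change of variable $u=\varpi_\E^{-n e_\E/2}y$, and pin down the constant via the normalization $W^{(n)}(1)=1$. The paper in fact presents the lemma simply as a reformulation of Assing's result, with precisely this argument sketched in a suppressed proof.
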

 Again by \cite{Hu:17a} the matrix coefficient $\Phii$ is supported on $v(a)=0$, $v(m)=i-n$ when $n_0<i<n-1$. On the support, by the above lemma and \eqref{Eq:WhitToMC},
\begin{align}\label{Eq:PhiforSupercuspidal}
\Phii&=C_0^{-1}\int\limits_{v(x)=0}\psi(mx)\int\limits_{v_\E(u)=-a(\theta)-e_\E+1}\theta^{-1}(u) \psi(-\frac{1}{ax}\varpi^{i}N_{\E/{F}}(u))\psi_\E(u)d^\times ud^\times x\\
&=C_0^{-1}\int\limits_{v(x)=0}\psi(m\frac{1}{x})\int\limits_{v_\E(u)=-a(\theta)-e_\E+1}\theta^{-1}(u) \psi(-\frac{x}{a}\varpi^{i}N_{\E/{F}}(u))\psi_\E(u)d^\times ud^\times x\notag.
\end{align}

Since $a(\pi)\geq 3$, we have $a(\theta)\geq 2$. Let $\AL\in \E^\times$ be the constant associated to $\theta$ by Lemma \ref{Lem:alphaofchi}, then $v_\E(\AL)=-a(\theta)+a(\psi_\E)=-a(\theta)-e_\E+1$. As $\theta|_{{F}^\times}$ is essentially the central character $w_\pi$ which is trivial, we can assume that $\AL$ is purely imaginary in $\E^\times$.
In the integrand in \eqref{Eq:PhiforSupercuspidal}, write $x=x_0(1+\D x)$ with $x_0\in \OF^\times /(1+\p^{\lceil (n-i)/2\rceil})$, $\D x\in \p^{\lceil (n-i)/2\rceil}$, and $$u=u_0(1+\D u)=(a_0+\sqrt{D} b_0)(1+\D a+\sqrt{D}\D b)$$ for $u_0\in (\varpi_E^{-a(\theta)-e_\E+1}\OF_E/\varpi_E^{-\lfloor a(\theta)/2\rfloor-e_\E+1}\OF_E)^\times$, $\D u = \D a+\sqrt{D}\D b\in \varpi_\E^{\lceil a(\theta)/2\rceil} \OF_\E$. 

Then
\begin{equation}\label{e:intphiii}
\Phii=C_0^{-1}\sum\limits_{x_0,u_0}\psi(\frac{m}{x_0})\theta^{-1}(u_0)\psi(-\frac{x_0}{a}\varpi^i N_{\E/{F}}(u_0))\psi_\E(u_0) \iint\limits_{\substack{\D x \in \p^{\lceil \frac{n-i}2\rceil}  \\ \D u \in \p_\E^{\lceil \frac{a(\theta)}2\rceil} }}F_{x_0, u_0}(\D x, \D u) \, d\D x d\D u \end{equation} where $F_{x_0, u_0}(\D x, \D u)=
 \psi\left(-\frac{m}{x_0}\D x-2\AL \sqrt{D}\D b-\frac{x_0}{a}\varpi^i N_{\E/F}(u_0)(\D x - 2 \D a)+2a_0\D a+2Db_0\D b \right)$ for $\D u= \D a+\sqrt{D}\D b$ with $\D a$, $\D b$ in $\OF$.
Here we have used that $$\theta^{-1}(1+\D u)=\psi_E(-\alpha \D u)=\psi_E(-\alpha \D a-\alpha\sqrt{D}\D b)=\psi(-2\alpha\sqrt{D}\D b),$$
$$\psi_E(u_0\D u)=\psi(2a_0\D a+2Db_0\D b                       ).$$

Using the fact that $a(\psi)=0$, we observe that in order for the integral in \eqref{e:intphiii} to be nonzero, we need the following conditions to hold: \emph{For all $x_1$, $a_1$, $b_1$ in $\OF$ such that $x_1 \in \p^{\lceil \frac{n-i}2\rceil}$, $a_1+\sqrt{D}b_1 \in \p_\E^{\lceil \frac{a(\theta)}2\rceil}$, we have }
\begin{align}
&(\frac{m}{x_0}+\frac{x_0}{a}\varpi^i N_{\E/F}(u_0)) \, x_1 \in \OF,   \label{equation:sc1}   \\
&(a_0-\frac{x_0}{a}\varpi^i N_{\E/F}(u_0))\, a_1 \in \OF,   \label{equation:sc2} \\
&(Db_0-\alpha\sqrt{D})\, b_1 \in \OF.\label{equation:sc3}
\end{align}

Now, using a very similar analysis as in the principal series case, we shall see that the number of pairs $(x_0, u_0)$ satisfying \eqref{equation:sc1}, \eqref{equation:sc2}, and \eqref{equation:sc3} is $\ll q^{O(1)}.$

Consider the number of $b_0$ satisfying \eqref{equation:sc3} first. When $e_E=1$, or $e_\E=2$ and $a(\theta)/2$ is odd, we can choose $a_1$, $b_1$ in $\OF$ such that $ a_1+\sqrt{D}b_1\in \p_\E^{\lceil a(\theta)/2\rceil}$, $b_1 \in \varpi_\E^{\lceil a(\theta)/2\rceil-e_E+1}\OF_\E^\times \cap \OF$, which combined with \eqref{equation:sc3} gives us
\begin{equation}
b_0\equiv \frac{\alpha}{\sqrt{D}} \mod \p_\E^{-\lceil a(\theta)/2\rceil-e_E+1}
\end{equation}
while by the definition of $u_0$, $b_0\sqrt{D}$ is well defined up to $\p_\E^{-\lfloor a(\theta)/2\rfloor-e_E+1}$. Thus
\begin{equation}\label{Eq:Noofb0}
\sharp \{b_0 \text{\ satisfying \eqref{equation:sc3}}\}\ll q^{O(1)}.
\end{equation}

When $e_\E=2$ and $a(\theta)/2$ is even, we can choose $ b_1\in \varpi_\E^{\lceil a(\theta)/2\rceil}\OF_E^\times \cap \OF$, and this time  \eqref{equation:sc3} gives us $b_0\equiv \frac{\alpha}{\sqrt{D}} \mod \p_\E^{-\lceil a(\theta)/2\rceil-2e_E+2} $. By the same argument as above, \eqref{Eq:Noofb0} still holds in this case.

Similarly for each fixed $u_0$, there exists solutions for $x_0$ from \eqref{equation:sc1} iff $$-\frac{am}{\varpi^iN_{E/F}(u_0)}$$ is a square modulo $\varpi^{\lfloor (n-i)/2\rfloor}$. In that case we obtain
\begin{equation}\label{equation:x0}
x_0\equiv \pm\sqrt{-\frac{am}{\varpi^iN_{E/F}(u_0)}} \mod \varpi^{\lfloor (n-i)/2\rfloor}.
\end{equation}
Here we have used that $p\neq 2$.
So by the definition of $x_0$,
\begin{equation}
\sharp \{x_0 \text{\ satisfying \eqref{equation:sc1} for fixed $u_0$}\}\ll q^{O(1)}.
\end{equation}
Finally we come to counting $a_0$. When $e_\E=1$, or $e_\E=2$ and $a(\theta)/2$ is even, we can choose $ a_1$, $b_1$ so that $a_1\in \varpi_\E^{\lceil a(\theta)/2\rceil}\OF_\E^\times \cap \OF$, $ a_1+\sqrt{D} b_1\in \p_\E^{\lceil a(\theta)/2\rceil}$, so that from \eqref{equation:sc2} we now deduce
\begin{equation}\label{equation:sc2eq}
a_0-\frac{x_0}{a}\varpi^iN_{E/F}(u_0)\equiv 0 \mod \varpi_E^{-\lceil a(\theta)/2\rceil}.
\end{equation}
Note that if $v_E(\frac{x_0}{a}\varpi^iN_{E/F}(u_0))=e_E(i-n)\geq -\lceil a(\theta)/2\rceil$, we get a unique solution $a_0\equiv 0\mod \varpi_E^{-\lceil a(\theta)/2\rceil}$, and by the definition of $a_0$ and the previous results,
\begin{equation}\label{equation:sctotalcount}
\sharp \{(a_0, b_0, u_0) \text{\ satisfying \eqref{equation:sc1}\eqref{equation:sc2}\eqref{equation:sc3}}\}\ll q^{O(1)}.
\end{equation}

Otherwise when $e_E(i-n)<-\lceil a(\theta)/2\rceil$, \eqref{equation:sc2eq} is a nontrivial congruence relation and $v(a_0)=i-n$. As $p\neq 2$, we have for any solution $a_0$,
$$
 a_0+\frac{x_0}{a}\varpi^iN_{E/F}(u_0)\equiv 0\mod \varpi^{i-n}.
$$
Multiplying it with \eqref{equation:sc2eq} and substituting \eqref{equation:x0}, we get
\begin{equation}
a_0^2\equiv -\frac{m}{a }\varpi^iN_{E/F}(u_0)=-\frac{m}{a }\varpi^i(a_0^2-b_0^2D) \mod \varpi_E^{-\lceil a(\theta)/2\rceil} \varpi^{i-n}.
\end{equation}
One can get at most two solutions of $a_0\mod \varpi_E^{-\lceil a(\theta)/2\rceil}$ for each fixed $b_0$. So \eqref{equation:sctotalcount} is still true. 

If $e_\E=2$ and $a(\theta)/2$ is odd, we can instead choose $a_1\in \varpi_\E^{\lceil a(\theta)/2\rceil+1}\OF^\times_\E \cap \OF$ in the argument above \eqref{equation:sc2eq}. The rest of the discussions are similar and \eqref{equation:sctotalcount} still holds.

In conclusion we get that
\begin{equation}
|\Phii|\ll q^{O(1)} |C_0^{-1}|\Vol(\D x)\Vol(\D u)\asymp \frac{1}{q^{(n-i)/2 + O(1)}},
\end{equation}
as required.
\end{proof}

\section{An application to subconvexity}
In this Section we explain how Corollary \ref{c:mainnewforms} leads to a subconvexity result for certain central $L$-values.

\subsection{The setup and the main result}
Throughout this section, we will go back to the global setting and freely use the notations defined in Section \ref{s:globalstatement} and Section \ref{s:globalfamilies}. Recall that we have fixed an indefinite quaternion division algebra $D$ over $\Q$ of discriminant $d$. In addition for this section we fix: \begin{itemize}

\item A squarefree integer $P$ such that $(P, 2d)=1$.

\item A quadratic number field $K/\Q$ such that \begin{itemize}
    \item All primes dividing $P$ are split in $K$,
    \item All primes dividing $d$ are inert in $K$.
    \end{itemize}

    \end{itemize}

Let $\mathcal{S}_{P}$ denote the set of irreducible, unitary, cuspidal, automorphic representations $\pi = \otimes_v \pi_v$ of $G(\A)$ with the following properties:
\begin{enumerate}
\item $\pi$ has trivial central character.

\item If $\ell$ is a prime such that $\ell\nmid P$, then $\pi_\ell$ is spherical (i.e., has a non-zero $K_\ell$-fixed vector).

\end{enumerate}

Note that (using the notation of Section \ref{s:globalfamilies}), for any $\pi \in \mathcal{S}_{P}$, we have $C'(\pi)$ divides $P$, and hence $C(\pi)$ is divisible only by primes dividing $P$. We remind the reader that $C(\pi)$ denotes the ``away-from-$d$-part" of the conductor of $\pi$ (the conductor of $\pi$ equals $dC(\pi)$). We let $\O_K$ denote the ring of integers of $K$ and $\rho_K$ the quadratic character on $\Q^\times \bs \A^\times$ associated to the extension $K/\Q$.

\begin{remark}By the Jacquet-Langlands correspondence, the set $\mathcal{S}_P$ is in \emph{functorial} bijection with the set of irreducible, unitary, cuspidal, automorphic representations on $\rm{PGL_2}(\A)$ whose conductor equals $dC$ for some $C|P^\infty$. \end{remark}

 Given $\pi \in \mathcal{S}_{P}$ and a character $\chi$ of $K^\times \bs \A_K^\times$ such that $\chi |_{\A^\times} = 1$, we are interested in the central $L$-value $L(1/2 , \pi \times \AI(\chi))$ of the Rankin-Selberg $L$-function. Here $\AI(\chi)$ denotes the global automorphic induction of $\chi$ from $\A_K^\times$ to $\GL_2(\A)$, whose existence follows either from the converse theorem (see Chapter 7 of \cite{Gelbart1975}) or more explicitly via the theta correspondence \cite{shalika-tanaka}. By purely local calculations  \cite[(a2)]{Rohrlich1994}, it can be seen that the conductor of $\AI(\chi)$ equals $\disc(K)  N(\cond(\chi))$.

 \begin{theorem}\label{t:sub}Let $P$, $K$ and $\mathcal{S}_P$ be as above. Let $\chi$ be a character of $K^\times \bs \A_K^\times$ such that $\chi |_{\A^\times} = 1$ and such that $\gcd(C(\chi), d)=1$ where $C(\chi)= N(\cond(\chi))$ equals the absolute norm of the conductor of $\chi$. Then for any $\pi \in \mathcal{S}_P$, we have $$L(1/2, \pi \times \AI(\chi)) \ll_{K, P, \pi_\infty, \chi_\infty, \eps} C(\pi)^{5/12+\eps} C(\chi)^{1/2+\eps}.$$
 \end{theorem}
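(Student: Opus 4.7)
The strategy is to invoke an explicit form of Waldspurger's toric period formula and then bound the resulting period trivially using the sup-norm estimate of Corollary \ref{c:mainnewforms}. Because every prime dividing $d$ is inert in $K$, the field $K$ embeds in $D$ globally; fix an embedding $K \hookrightarrow D$ and write $T \subset G$ for the resulting torus, and $[T] := T(\Q) Z(\A) \backslash T(\A)$ for the associated compact quotient, which has volume $\ll_K 1$. For a factorizable $L^2$-normalized vector $\phi = \otimes_v \phi_v \in V_\pi$, Waldspurger's formula in the refined Ichino-Ikeda form (made concrete for $\GL_2$ by Martin-Whitehouse, File-Martin-Pitale and others) takes the schematic shape
\[
L\bigl(1/2,\pi \times \AI(\chi)\bigr) \;=\; \kappa_\infty\cdot\frac{L(1,\ad\pi)}{L(1,\rho_K)}\cdot |P_\chi(\phi)|^2 \cdot \prod_p \alpha_p(\phi_p,\chi_p)^{-1},
\]
where $P_\chi(\phi) = \int_{[T]} \phi(t)\chi(t)\,dt$ is the toric period, and the $\alpha_p$ are suitably normalized local toric integrals that equal $1$ on unramified data.

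Next, I would choose the test vector $\phi$ factor by factor: the spherical vector at each $p \nmid Pd\,C(\chi)$; a suitable translate of the local newvector (analogous to the $\phi_p'$ constructed in Section \ref{s:local}) at each $p \mid P$, where $\pi_p$ is highly ramified but $\chi_p$ is unramified (because $p$ splits in $K$ and is coprime to $C(\chi)$); a Gross-Prasad-type ramified test vector at each $p \mid C(\chi)$; and the essentially unique vector at each $p \mid d$. With these explicit choices, the $\alpha_p$ admit closed-form evaluations or known lower bounds, delivering $\prod_p \alpha_p^{-1} \ll_{K,\chi_\infty,\eps} C(\chi)^{1/2+\eps} C(\pi)^\eps$. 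The sup-norm bound from Corollary \ref{c:mainnewforms} gives $\|\phi\|_\infty \ll_{\pi_\infty,P,\eps} C(\pi)^{5/24+\eps}$, and the trivial estimate $|P_\chi(\phi)| \leq \|\phi\|_\infty \cdot \vol([T])$ then yields $|P_\chi(\phi)|^2 \ll_{K,\pi_\infty,P,\eps} C(\pi)^{5/12+\eps}$. Substituting into the displayed identity and absorbing $L(1,\ad\pi) \ll_\eps C(\pi)^\eps$ and $L(1,\rho_K)^{-1} \ll_K 1$ into the implied constants produces exactly the claimed bound.

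The main obstacle is the bookkeeping of the local factors $\alpha_p$ at the primes $p \mid P C(\chi)$: one must verify that with the stated choice of local test vectors each $\alpha_p$ admits a lower bound of the correct polynomial size in $C(\pi_p)$ and $C(\chi_p)$, and that the aggregate contribution is the stated $C(\chi)^{1/2+\eps}$ (matching the convexity exponent in the $\chi$-aspect). These are entirely local and well understood in the literature on explicit Waldspurger formulas, so as emphasized in the paper's introduction no further global ingredient beyond Corollary \ref{c:mainnewforms} is needed.
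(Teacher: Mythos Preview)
Your proposal is correct and follows essentially the same route as the paper: both invoke the explicit Waldspurger formula of File--Martin--Pitale with a carefully chosen factorizable test vector (a translate of the local newvector at primes dividing $C(\pi)$, a Gross-type vector at primes dividing $C(\chi)$, spherical elsewhere), bound the toric period trivially by the sup-norm, and then apply Corollary~\ref{c:mainnewforms}. One small imprecision: you assume $\chi_p$ is unramified at every $p\mid P$, but a prime can divide both $P$ and $C(\chi)$; the paper's choice of test vector at $p\mid C(\pi)$ already incorporates the $\chi$-conductor via an extra unipotent shift, so this case is covered by the File--Martin--Pitale formula without further work.
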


 The above theorem immediately implies a subconvexity result for $L(1/2, \pi \times \AI(\chi))$ for \emph{fixed} $\chi$ and varying $\pi \in \mathcal{S}_P$.
 \begin{corollary}\label{c:sub}Let $P$, $K$, $\chi$ and $\mathcal{S}_P$ be as in Theorem \ref{t:sub}. Then for $\pi \in \mathcal{S}_P$, we have $$L(1/2, \pi \times \AI(\chi)) \ll_{K, P, \pi_\infty, \chi, \eps} \big(C(\pi \times \AI(\chi))\big)^{5/24+\eps}$$ where $C(\pi \times \AI(\chi))$ denotes the (finite part of the) analytic conductor of $L(s, \pi \times \AI(\chi))$.
 \end{corollary}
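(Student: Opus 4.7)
The plan is to deduce Corollary \ref{c:sub} from Theorem \ref{t:sub} by a direct conductor comparison. Specifically, for fixed $\chi$ we will establish
\[
C(\pi \times \AI(\chi)) \asymp_{K, P, \chi} C(\pi)^2
\]
as $\pi$ ranges over $\mathcal{S}_P$. Granting this, Theorem \ref{t:sub} yields
\[
L(1/2, \pi \times \AI(\chi)) \ll_{K, P, \pi_\infty, \chi, \eps} C(\pi)^{5/12 + \eps} \asymp_\chi C(\pi \times \AI(\chi))^{5/24 + \eps/2},
\]
with the factor $C(\chi)^{1/2+\eps}$ from Theorem \ref{t:sub} absorbed into the $\chi$-dependent implicit constant (since $\chi$ is fixed).

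To establish the conductor identity I would check local factors prime by prime. Since $\pi \in \mathcal{S}_P$ is spherical at every prime $\ell \nmid Pd$, and the ramification sets of $d$, $K$, and $\chi$ are fixed once $\chi$ is fixed, the only primes at which the local conductor exponent of $\pi$ can grow with $\pi$ are those dividing $P$. The crucial calculation is at such primes: for $\ell \mid P$ at which $\chi$ is unramified (all but finitely many $\ell \mid P$), the hypothesis that $\ell$ splits in $K$ forces $\AI(\chi)_\ell$ to be an unramified principal series $\mu_1 \boxplus \mu_2$, whence
\[
L(s, \pi_\ell \times \AI(\chi)_\ell) = L(s, \pi_\ell \otimes \mu_1)\, L(s, \pi_\ell \otimes \mu_2).
\]
Since unramified twists preserve the local conductor, the local Rankin-Selberg conductor exponent is exactly $2 a(\pi_\ell)$. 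At the finitely many remaining primes $\ell \mid P$ where $\chi$ is ramified, the twisting characters $\mu_i$ are fixed, and as soon as $a(\pi_\ell) \ge 2 a(\mu_i)$ (which fails for at most finitely many $\pi_\ell$) the conductor of $\pi_\ell \otimes \mu_i$ is still $a(\pi_\ell)$, again contributing $\ell^{2 a(\pi_\ell)}$ up to a bounded factor. At primes $\ell \mid d$ (inert in $K$ with $\chi$ unramified, by hypothesis) or at ramified places of $\chi$ not dividing $P$, the conductor exponents of $\pi_\ell \times \AI(\chi)_\ell$ are uniformly bounded in terms of the fixed data $(K, P, \chi)$.

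Multiplying over all primes yields $C(\pi \times \AI(\chi)) \asymp_\chi C(\pi)^2$, and Corollary \ref{c:sub} follows from Theorem \ref{t:sub} as displayed above. The main content of this section is of course Theorem \ref{t:sub} itself, whose proof rests on substituting the sup-norm bound of Corollary \ref{c:mainnewforms} into an explicit Waldspurger-type formula for the toric period; Corollary \ref{c:sub} is then merely a repackaging in terms of the analytic conductor. No serious obstacle is expected beyond the routine Rankin-Selberg local conductor computation described above, which is standard whenever one factor is a $\GL_2$ representation twisted by an unramified principal series.
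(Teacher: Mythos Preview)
Your proposal is correct and follows essentially the same strategy as the paper's proof: establish the lower bound $C(\pi \times \AI(\chi)) \gg_\chi C(\pi)^2$ and substitute into Theorem \ref{t:sub}. The only difference is that the paper invokes Proposition 3.4 of \cite{tunnell78} for the Rankin--Selberg conductor, whereas you compute the local conductors directly from the factorization $\AI(\chi)_\ell = \mu_1 \boxplus \mu_2$ at split primes; both routes yield the same conductor comparison.
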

\begin{proof} Any ``conductor dropping" for  $\pi \times \AI(\chi)$ is only potentially possible at primes $p|P$ for which $v_p(C(\chi)) = v_p(C(\pi))>0$. More precisely, let $\P_1$ be the set of prime numbers $p$ such that  $p | C(\chi)$ and $v_p(C(\chi)) = v_p(C(\pi))$. Then using Proposition 3.4 of \cite{tunnell78}, we see that $$C(\pi \times \AI(\chi)) = d^2\disc(K)^2\frac{\mathrm{lcm}(C(\pi)^2, C(\chi)^2)}{\prod_{p \in \P_1} p^{t_p}}$$ where the $t_p$ are non-negative integers satisfying $t_p \le 2v_p(C(\chi))$.  It follows immediately that  \begin{equation}\label{e:triv}C(\pi \times \AI(\chi)) \gg_{\chi} C(\pi)^2.\end{equation} The desired result follows from \eqref{e:triv} and Theorem \ref{t:sub}.
\end{proof}

 \begin{remark}By definition, $L(s , \pi \times \AI(\chi))$ is the finite part of the Langlands $L$-function attached to the automorphic representation $\pi \boxtimes \AI(\chi)$ on $D^\times \times \GL_2$. It is immediate that $L(s, \pi \times \AI(\chi)) = L(s, \pi' \times \AI(\chi))$ where $\pi'$ is the automorphic representation on $\GL_2(\A)$ associated to $\pi$ via the Jacquet-Langlands correspondence. Hence $L(s, \pi \times \AI(\chi))$ can be viewed as an $L$-function on $\GL_2(\A) \times \GL_2(\A)$.

 We remark that $G'=PD^\times$ is isomorphic to an orthogonal group $\SO(V)$ where $V$ is a three dimensional quadratic space. So $\pi$ can be regarded as an automorphic representation of $\SO(V)$. Moreover, $\Q^\times \bs K^\times \simeq \SO(W)$, where $W\subset V$ is a two dimensional quadratic space; this allows us to view $\chi$ as an automorphic representation $\pi_0$ on $\SO(W)$. Under this viewpoint, $L(s, \pi \times \AI(\chi)) = L(s, \pi \boxtimes \pi_0)$ is the standard $L$-function on $\SO(V) \times \SO(W)$, which puts it into the Gross-Prasad framework.

   Finally, we have that $$L(s , \pi \times \AI(\chi)) = L(s, \pi_K \times \chi)$$ where $\pi_K$ denotes the base-change of $\pi$ to $G(\A_K)$. Thus $L(s , \pi \times \AI(\chi))$ can be also viewed as an $L$-function on  $D^\times(\A_K) \times \A_K^\times$ or on $\GL_2(\A_K) \times \A_K^\times$.

   Thus, Theorem \ref{t:sub} can be regarded as a subconvexity result for any of the groups $G(\A) \times \GL_2(\A)$,  $\GL_2(\A) \times \GL_2(\A)$, $\SO(V)(\A)\times \SO(W)(\A)$, $G(\A_K) \times \GL_1(\A_K)$, and $\GL_2(\A_K) \times \GL_1(\A_K)$.     We also note that if $\chi=\bf 1$ is the trivial character, then $L(s , \pi \times \AI(\mathbf{1})) =  L(s , \pi)L(s, \pi \times \rho_K)$. In this special case, we suspect that other existing methods may give a superior exponent in the setting of Theoem \ref{t:sub}.
  \end{remark}

 \begin{remark} The representation $\AI(\chi)$ can be seen to be generated by the classical theta series (due to Hecke and Maass) associated to Hecke characters on $K^\times \bs \A_K^\times$. More precisely, we can identify a Hecke character $\chi$ on $K$ of  conductor $\mathbf{m}$ with a character on the group of fractional ideals of $K$ coprime to  $\mathbf{m}$. This allows us to write down explicitly an automorphic newform $\theta_\chi$ that generates $\AI(\chi)$. For example, suppose that $K=\Q(\sqrt{M})$ is an imaginary quadratic field with $M<0$ a fundamental discriminant. Suppose also that $\chi_\infty(\alpha) = \left(\frac{\alpha}{|\alpha|}\right)^\ell$ where $\ell\in \Z_{\ge0}$ and denote $Q = N(\mathbf{m})$. Then $\theta_\chi$ is the holomorphic newform\footnote{$\theta_\chi$ is a cusp form iff $\chi$ does not factor through the norm map; this happens if and only if $\chi^2 \neq 1$.} of weight $\ell+1$, level $|MQ|$ and character $\left(\frac{M}{\cdot}\right)$ given by the sum over ideals $\a$ as $$\theta_\chi(z) = \sum_{\a \subset \O_K} \chi(\a) (N(\a))^{\frac{\ell}2} e(N(\a)z).$$ We can write down a similar formula when $K$ is real; see Appendix A.1 of \cite{humphries-khan}. In this case the hypothesis $\chi|_{\A^\times} =1$ implies that $\theta_\chi$ is a weight 0 Maass form.
\end{remark}

For the convenience of the reader, we give a version of Theorem \ref{t:sub} that avoids any mention of quaternion algebras and that focusses on a single prime (``depth aspect") for simplicity.

\begin{corollary}\label{c:gl2sub}Let $p$ be an odd prime, and $d\neq 1$ a positive squarefree integer with an even number of prime factors. Assume that $(p,d)=1$. Let $M<0$ be a fundamental discriminant and put $K=\Q(\sqrt{M})$. Assume that $\left(\frac{M}{p}\right) = 1$ and $\left(\frac{M}{q}\right) = -1$ for all primes $q$ dividing $d$. Let $\chi$ be a character of $K^\times \bs \A_K^\times$ such that $\chi |_{\A^\times} = 1$ and such that $\gcd(C(\chi), d)=1$ where $C(\chi)= N(\cond(\chi))$ .
         Let $f$ be either a holomorphic cuspform of weight $k \ge 2$ or a Maass cuspform of weight $0$ and eigenvalue $\lambda$ with respect to the subgroup $\Gamma_0(dp^n)$ and assume that $f$ is a newform (of trivial nebentypus). Then we have $$L(1/2, f \times \theta_\chi) \ll_{d, p, M, \chi_\infty, \lambda/k, \epsilon} (p^n)^{5/12 + \eps} C(\chi)^{1/2 + \eps}.$$
\end{corollary}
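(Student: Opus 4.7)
The plan is to reduce Corollary \ref{c:gl2sub} to Theorem \ref{t:sub} via the Jacquet--Langlands correspondence. I would proceed in three short steps.

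First, I choose the quaternion algebra. Since $d$ is squarefree with an even number of prime factors, there exists (up to isomorphism) an indefinite quaternion division algebra $D$ over $\Q$ of reduced discriminant exactly $d$. By assumption every prime dividing $d$ is inert in $K$ and $p$ splits in $K$, so the splitting hypotheses of Theorem \ref{t:sub} are satisfied with $P := p$. The hypotheses on $\chi$ (trivial restriction to $\A^\times$ and $\gcd(C(\chi), d) = 1$) are taken over verbatim.

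Second, I transfer $f$ to $D^\times(\A)$. The newform $f$ on $\Gamma_0(dp^n)$ of trivial nebentypus generates an irreducible cuspidal automorphic representation $\pi'$ on $\PGL_2(\A)$ of conductor $dp^n$. Since $d$ is squarefree, $\pi'_q$ has conductor exactly $q$ at each prime $q \mid d$, and with trivial central character this forces $\pi'_q$ to be a twist of Steinberg; in particular, $\pi'_q$ is square-integrable. The global Jacquet--Langlands correspondence therefore produces a unique $\pi$ on $D^\times(\A)$ matching $\pi'$, and tracking the local conductors shows $\pi \in \mathcal{S}_p$ with trivial central character and $C(\pi) = p^n$.

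Third, I apply Theorem \ref{t:sub} to $(\pi, \chi)$, obtaining
\begin{equation*}
L(1/2, \pi \times \AI(\chi)) \ll_{K, p, \pi_\infty, \chi_\infty, \eps} p^{5n/12 + \eps}\, C(\chi)^{1/2 + \eps}.
\end{equation*}
To finish I identify this $L$-value with $L(1/2, f \times \theta_\chi)$. By construction the representation generated by the classical theta series $\theta_\chi$ is $\AI(\chi)$, so $L(s, f \times \theta_\chi) = L(s, \pi' \times \AI(\chi))$. At each $q \mid d$ the primes of $K$ above $q$ are inert and lie outside the conductor of $\chi$, so $\AI(\chi)_q$ is an unramified principal series, and the local Rankin--Selberg $L$-factor is preserved under local Jacquet--Langlands (taking Steinberg on the $\GL_2$ side to the corresponding finite-dimensional representation on the $D^\times$ side). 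Therefore $L(s, \pi \times \AI(\chi)) = L(s, \pi' \times \AI(\chi))$. The classical archimedean data ($k$ or $\lambda$) determines $\pi_\infty$, and $K$ is determined by $M$, so the remaining dependencies collapse into the stated implicit constant.

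The only real bookkeeping is the matching of local data and $L$-factors at primes dividing $d$ under Jacquet--Langlands; this is standard, and since $d$ is fixed throughout, any bounded local discrepancy would in any case be absorbed into the implied constant. No analytic input beyond Theorem \ref{t:sub} is needed.
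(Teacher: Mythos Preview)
Your proposal is correct and follows essentially the same route as the paper: choose the indefinite quaternion division algebra $D$ of discriminant $d$, transfer $f$ via Jacquet--Langlands to $\pi \in \mathcal{S}_p$ on $D^\times(\A)$, and invoke Theorem \ref{t:sub}. Your write-up is more detailed than the paper's (which simply asserts the transfer and the identity $L(s,\pi\times\AI(\chi))=L(s,\pi'\times\AI(\chi))$ as immediate), but the logic is the same.
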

\begin{proof}Let $\pi'$ be the automorphic representation attached to $f$. Note that $\pi'$ is (up to a twist) a Steinberg representation at each prime dividing $d$.   We let $D$ be the indefinite quaternion division algebra of reduced discriminant $d$. Then $\pi'$ transfers to an automorphic representation $\pi \in \mathcal{S}_p$ on $D^\times(\A)$. The corollary now follows immediately from Theorem \ref{t:sub}.
\end{proof}

\subsection{An explicit version of Waldspurger's formula}We now begin the proof of Theorem \ref{t:sub}. We assume the conditions of Theorem \ref{t:sub} for the rest of this Section. Let $\pi = \otimes_v \pi_v \in \mathcal{S}_P$.
Then for \emph{all} finite primes $p$, $\pi_p$ has a $\chi_p$-Waldspurger model; this follows, e.g., from the calculations of Section 5 of \cite{Gross88}. We may further assume that $\pi_\infty$ has a $\chi_\infty$-Waldspurger model, since otherwise the global $\epsilon$-factor $\eps(\pi \times \AI(\chi))$  would equal -1 and we would have $L(1/2, \pi \times \AI(\chi)) =0$, making Theorem \ref{t:sub} trivial.

Since all primes dividing $d$ are inert in $K$, it follows that $K$ embeds in $D$. We fix an embedding $\Phi:K \hookrightarrow D$ and let $T = \Phi(K^\times) \simeq K^\times$ be the corresponding torus inside $G$. We henceforth consider $\chi$ as a character of $\A^\times T(\Q)\bs T(\A^\times)$. Given any $\phi \in V_\pi$, consider the period integral $$P(\phi) = \int_{\A^\times T(\Q)\bs T(\A)}\phi(t)\chi^{-1}(t) dt$$ where $dt$ is the product of local Tamagawa measures. Also, for this Section only, we let the measure on $G(\A)$ be the product of the local Tamagawa measures and define $\langle \phi, \phi \rangle$ with respect to this measure. A beautiful formula of Waldspurger \cite{Waldspurger1985} states that $$\frac{|P(\phi)|^2}{\langle \phi, \phi \rangle} = \zeta(2) \frac{L(1/2, \pi \times \AI(\chi)) }{L(1, \pi, \Ad)} \prod_v \alpha_v(K, \chi, \phi)$$ where the $\alpha_v(K, \chi, \phi)$ are local integrals which equal 1 at almost all places $v$. There have been several papers which have explicitly computed these local integrals at the remaining (ramified) places under certain assumptions, leading to an explicit Waldspurger formula in those cases. We will need such an explicit formula which applies to our setup, due to File, Martin and Pitale \cite{FMP17}.

To state the formula, let us first set up some notation. First of all, we choose the embedding $\Phi:K \hookrightarrow D$ such that $\O_K$ embeds in $\O^\m$ optimally, i.e., $\Phi(K) \cap \O^\m =  \Phi(\O_K)$. Note that for each prime $p$ we have $\Phi(K_p) \cap \O_p^\m =  \Phi(\O_{K,p})$ where $K_p = K \otimes_\Q \Q_p$ and $\O_{K,p} = \O_K \otimes_\Z \Z_p$. Next, we  need to specify the automorphic form $\phi= \otimes_v \phi_v$. For each finite prime $p$ that does not divide $C(\pi)C(\chi)$, we let $\phi_p$ be the (unique up to multiples) non-zero vector in $\pi_p$ that is fixed by $K_p$.

Next, let $p$ be a prime that divides $C(\chi)$ but does not divide $C(\pi)$. Define $m_p$ to be the largest positive integer such that $p^{m_p} | C(\chi)$ and put $c_p = \left \lceil \frac{m_p}{2}
 \right \rceil$. (In fact, $m_p$ is always even, but we won't need this fact). Note that the character $\chi_p$ on $K_p^\times$ is trivial on the subgroup $\Z_p^\times + p^{c_p} \O_{K,p}$.  Now by Section 3 of \cite{Gross88}, there exists a maximal order $R_p$ of $D_p$ such that $R_p \cap \Phi(K_p) = \Z_p + p^{c_p} \Phi(\O_{K,p})$. We let $\phi_p$ be the unique (up to multiples) vector in $\pi_p$ that is fixed by $R_p^\times$. Note that $R_p^\times$ is conjugate to $K_p$; hence $\phi_p$ is a $G_p$-translate of the unique (up to multiples) $K_p$-fixed vector (spherical vector) in $\pi_p$.

Next, let  $p$ be a prime that divides $C(\pi)$. Note that $K_p \simeq \Q_p \oplus \Q_p$. Define $c_p$ as above, and define $n_p = a(\pi_p)$, so that $n_p$ is the largest positive integer such that $p^{n_p} | C(\pi)$. Let $K_0(n_p)$ be as usual the subgroup of $\GL_2(\Z_p)$ consisting of matrices that are upper triangular modulo $p^{n_p}$. Take $g_p \in G_p$ such that $\iota_p(g_p^{-1}T(\Q_p)g_p)$ is the diagonal subgroup of $\GL_2(\Q_p)$. Define the subgroup $K'_0(n_p)$ of $G_p$ via
$$K'_0(n_p) =  g_p \iota_p^{-1}\left(\mat{1}{-p^{-c_p}}{0}{1} K_0(n_p) \mat{1}{p^{-c_p}}{0}{1}\right)g_p^{-1}$$
and let  $\phi_p$ be the unique (up to multiples) vector in $\pi_p$ that is fixed by $K'_0(n_p)$. Note that $\phi_p$ is a $G_p$-translate of the unique (up to multiples) newvector in $\pi_p$.

Finally, we define $\phi_\infty$. Let $K_\infty$ be a maximal compact connected subgroup of $D_\infty$ whose restriction to $T(\R)$ is a maximal compact connected subgroup of $T(\R)$. Let $\phi_\infty$ be a vector of minimal (non-negative) weight such that $\pi_\infty(t_\infty) \phi_\infty = \chi_\infty(t_\infty) \phi_\infty$ for all $t_\infty \in K_\infty \cap T(\R)$.

Put $\phi= \otimes_v \phi_v$. For brevity, put $N=C(\pi)$, $Q=C(\chi)$. Then we have the following explicit version of Waldspurger's formula due to File, Martin, and Pitale (Theorem 1.1 of \cite{FMP17}), simplified to our setting:
\begin{equation}\label{e:waldsexp}\frac{|P(\phi)|^2}{\langle \phi, \phi \rangle} = \frac{C_\infty\zeta(2)}{2\sqrt{Q\disc(K})} \prod_{p|Q} L(1, \rho_{K,p})^2 \prod_{p|N} \left(1+\frac1p \right) \prod_{p|d} \left(1-\frac1p \right)  \frac{L^{Nd}(1/2, \pi \times \AI(\chi)) }{L^{Nd}(1, \pi, \Ad)}. \end{equation}
Above, $L^{Nd}$ denotes the $L$-functions where we omit the Euler factors at primes dividing $Nd$, $\rho_K$ denotes the quadratic character associated to $K/\Q$, and the quantity $C_\infty$ is a positive real number written down explicitly in \cite[7B]{FMP17} that depends only on $\pi_\infty$ and $\chi_\infty$.

\subsection{The proof of Theorem \ref{t:sub}}
We continue to use the notation $N=C(\pi)$, $Q=C(\chi)$. The explicit formula \eqref{e:waldsexp} immediately implies the asymptotic inequality \begin{equation}\label{penult} \left( \frac{\sup_{g \in G(\A)} |\phi(g)|}{\|\phi\|_2} \right)^2 \ge \frac{1}{(\vol(\A^\times T(\Q) \bs T(\A))^2} \frac{|P(\phi)|^2}{\langle \phi, \phi \rangle} \gg_{K, P, \pi_\infty, \chi_\infty, \eps} \frac{1}{\sqrt{Q}} (QN)^{-\eps}  L(1/2, \pi \times \AI(\chi)).\end{equation}

On the other hand, our choice of $\phi$ implies that $\phi_p$ is a translate of the local newvector at all primes $p$. Hence $\phi = R(g) \phi'$ where $g \in G(\A_\f)$ and $(\C\phi', \pi) \in \Aa(G, \mathcal{G})$ with $\mathcal{G}$  as in Proposition \ref{t:mainlocal}. Furthermore $\phi_\infty = \phi'_\infty$ is a vector of weight $k$ where $k$ depends only on $\chi_\infty$. Since the sup-norm does not change under translation, we have, using Corollary \ref{c:mainnewforms} \begin{equation}\label{ult} \left( \frac{\sup_{g \in G(\A)} |\phi(g)|}{\|\phi\|_2} \right)^2  =  \left( \frac{\sup_{g \in G(\A)} |\phi'(g)|}{\|\phi'\|_2} \right)^2  \ll_{\pi_\infty, \chi_\infty, P, \eps} N^{5/12 + \eps}.\end{equation}

Combining \eqref{penult} and \eqref{ult} we obtain $$L(1/2, \pi \times \AI(\chi)) \ll_{K,P,\pi_\infty, \chi_\infty, \eps} N^{5/12+\eps}Q^{1/2+\eps}$$ as desired.

 \bibliography{sup-compact}

\end{document}